\documentclass[12pt]{amsart}
\usepackage[matrix,arrow]{xy}
\usepackage{amssymb}
\usepackage{amsmath}
\usepackage{amsfonts}
\usepackage{epsfig}
\usepackage{color}
\usepackage{epstopdf}
\usepackage{graphicx}
\usepackage{amsthm}
\usepackage{mathtools}
\usepackage{enumerate}
\usepackage[mathscr]{eucal}
\usepackage{verbatim}

\usepackage[bookmarks=true,hyperindex,pdftex,colorlinks,citecolor=blue,
linkcolor=blue,urlcolor=blue]{hyperref}
\usepackage{tikz}
\usetikzlibrary{matrix,arrows.meta}

\theoremstyle{plain}
\newtheorem{theorem}{Theorem}

\newtheorem{prop}[theorem]{Proposition}
\newtheorem{lemma}[theorem]{Lemma}

\theoremstyle{definition}
\newtheorem{example}[theorem]{Example}

\newtheorem*{question}{Question}

\newtheorem{definition}[theorem]{Definition}

\newcommand{\R}{\mathbb{R}}
\newcommand{\N}{\mathbb{N}}

\newcommand{\Lin}{\mathcal{L}}

\newcommand{\F}{\mathcal{F}}
\newcommand{\eps}{\varepsilon}
\newcommand{\lam}{\lambda}
\newcommand{\ph}{\varphi}

\DeclareMathOperator{\spann}{span}

\DeclareMathOperator{\NA}{NA}

\DeclareMathOperator{\MA}{MA}

\DeclareMathOperator{\SNA}{SNA}
\DeclareMathOperator{\SMA}{SMA}

\newcommand{\Mol}{\operatorname{Mol}}

\newcommand{\Lip}{{\mathrm{Lip}}_0}

\title[A bi-Lipschitz characterization of strong minimum-attainment]
{A bi-Lipschitz characterization of strong minimum-attainment for Lipschitz maps}

\author[G.~Choi]{Geunsu Choi}
\address[G.~Choi]{Department of Mathematics Education, Sunchon National University, Jeonnam 57922, Republic of Korea}
\email{\texttt{gschoi@scnu.ac.kr}}

\keywords{Lipschitz map, Norm-attainment, Minimum modulus, Bi-Lipschitz embedding, Metric space}
\subjclass[2020]{Primary: 46B04;  Secondary: 26A16, 46B20, 54E50}

\date{\today}                                           

\begin{document}

\begin{abstract}
 We completely characterize the denseness of strongly minimum-attaining Lipschitz functions, a minimum analogue for strongly norm-attaining Lipschitz functions, in terms of bi-Lipschitz embeddings. More precisely, our main result shows that the set of strongly minimum-attaining Lipschitz functions defined on a complete metric space $M$ fails the denseness if and only if $M$ is bi-Lipschitz equivalent to a subset of $\mathbb{R}$ with positive Lebesgue measure, or equivalently, if $M$ admits a bi-Lipschitz embedding into $\mathbb{R}$ and $M$ has positive 1-dimensional Hausdorff measure. As a consequence, we provide an isometric characterization of the pure 1-unrectifiability of $M$ in terms of strongly minimum-attaining Lipschitz maps defined on bi-Lipschitz copies of closed subsets of $M$. Several counterexamples showing that the main result cannot be naturally extended to the vector-valued setting are also presented.
\end{abstract}

\maketitle

\section{Introduction}

The Bishop-Phelps theorem \cite{BP} initiated one of the central approximation problems in Banach space theory. It states that every bounded linear functional can be approximated arbitrarily well by functionals attaining their norm, and thus raises the natural question of whether the same phenomenon persists for bounded linear operators; given Banach spaces $X$ and $Y$ with $S_X$ the unit sphere of $X$, whether the set of \textit{norm-attaining operators}
$$
\NA(X,Y) = \bigl\{ T \in \Lin(X,Y) : \|T(x)\|=\|T\| \text{ for some } x\in S_X \bigr\}.
$$
is dense or not in the space $\Lin(X,Y)$ of all bounded linear operators from $X$ into $Y$. In contrast with the scalar-valued case, it was first shown by Lindenstrauss \cite{L} that $\NA(X,Y)$ need not be dense in $\mathcal L(X,Y)$ in general. The norm-attaining theory developed around this problem has uncovered a close connection between norm-attainment and the geometry of the domain and range spaces (see \cite{B} for instance). Nevertheless, a complete characterization of the pairs $(X,Y)$ for which $\NA(X,Y)$ is dense in $\Lin(X,Y)$ remains unknown.

A natural Lipschitz analogue of this problem is obtained by replacing the unit sphere of a Banach space with pairs of distinct points of a metric space. Given a (pointed) metric space $M$ and a real Banach space $Y$, we denote by $\Lip(M,Y)$ the Banach space of all real-valued Lipschitz maps from $M$ into $Y$ which vanish at the distinguished point $0$, endowed with the Lipschitz norm
$$
\|f\| = \!\sup_{p \neq q \in M}\! \frac{\|f(p)-f(q)\|}{d(p,q)}.
$$
A Lipschitz map $f\in\Lip(M,Y)$ is said to \textit{strongly attain its norm} if there are points $p \neq q\in M$ such that
$$
\frac{\|f(p)-f(q)\|}{d(p,q)} = \|f\|.
$$
The set of all such Lipschitz maps is denoted by $\SNA(M,Y)$. When $Y = \R$, we simply call $f$ a Lipschitz function and write $\Lip(M)$ and $\SNA(M)$ respectively for convenience. The word ``strongly'' distinguishes this notion from ordinary norm-attainment after identifying $\Lip(M)$ with the dual of the Lipschitz-free space $\F(M)$. Recall that the \textit{Lipschitz-free space} over $M$ is the Banach space
$$
\F(M) := \overline{\spann}\{\delta_x : x \in M\} \subseteq \Lip(M)^*,
$$
where $\delta_x : \Lip(M) \to \R$ is the evaluation functional given by $\delta_x(f) = f(x)$. We define the set of \textit{molecules} by
$$
\Mol(M) = \left\{m_{p,q} := \dfrac{\delta_p - \delta_q}{d(p,q)} : p \neq q \in M\right\} \subseteq S_{\F(M)}.
$$
It is well known that every $f \in \Lip(M,Y)$ corresponds isometrically to an operator $T_f \in \Lin(\F(M),Y)$, determined by $T_f(\delta_x) = f(x)$ for $x \in M$. Thus, from this viewpoint, strong norm attainment means that $T_f$ attains its norm at a molecule, rather than at an arbitrary element of the unit sphere of $\F(M)$.

The denseness of $\SNA(M,Y)$ has been studied intensively in recent years (see \cite{CGMR,KMS} for instance). For example, if $\F(M)$ has the Radon-Nikod\'ym property (\textup{RNP} for short), then the set of strongly norm-attaining Lipschitz maps is dense in $\Lip(M,Y)$ for every Banach space $Y$; similar conclusions follow from several related geometric structures of $\F(M)$ \cite{CCGMR}. On the other hand, there are broad classes of metric spaces for which densness fails. This is the case, for instance, for length spaces or closed subsets of $\R$ with positive Lebesgue measure \cite{CCGMR}, and for $C^1$ curves \cite{Chi}. These results reveal a close connection between strong norm-attainment and the geometry of the underlying metric space. Nevertheless, no complete geometric characterization of those metric spaces $M$ for which $\SNA(M,Y)$ is dense in $\Lip(M,Y)$ is currently known, even when $Y=\R$.

In parallel with the theory of norm attainment, the minimum modulus of bounded linear operators has also attracted increasing attention (see \cite{Cha,CL} for instance). For $T\in\mathcal L(X,Y)$, the \textit{minimum modulus} of $T$ is defined by
$$
m(T) = \inf_{x\in S_X}\|T(x)\|.
$$
We say that $T$ \textit{attains its minimum} if there exists $x\in S_X$ such that $\|T(x)\| = m(T)$, and we denote the set of all minimum-attaining operators from $X$ into $Y$ by $\MA(X,Y)$. Although every non-injective operator trivially attains its vanishing minimum, the approximation problem for minimum-attaining operators is nontrivial, particularly within the class of operators which are bounded below.

Recent work \cite{GMMR} has shown that the densness of minimum-attaining operators is also closely governed by the Banach space geometry in terms of the \textup{RNP}. Their results demonstrate that the minimum-attainment, just as the norm-attainment, reflects subtle geometric properties of Banach spaces. A complete characterization of the individual pairs $(X,Y)$ for which $\MA(X,Y)$ is dense in $\Lin(X,Y)$, however, is still unknown. \\

The purpose of the present paper is to develop a Lipschitz counterpart of this minimum-attainment theory. Given $f \in \Lip(M,Y)$, we define the \textit{minimum modulus} of $f$ by
$$
m(f) = \!\inf_{p \neq q \in M}\! \frac{\|f(p)-f(q)\|}{d(p,q)}.
$$
We say that $f$ \textit{strongly attains its minimum} if there are points $p \neq q \in M$ for which
$$
\frac{\|f(p)-f(q)\|}{d(p,q)} = m(f).
$$
The set of all strongly minimum-attaining Lipschitz maps from $M$ into $Y$ will be denoted by $\SMA(M,Y)$, and by $\SMA(M)$ when $Y=\R$. This turns out to become a very natural definition, as it is clear that a bounded linear operator on a Banach space attains its minimum if and only if it strongly attains its minimum as a Lipschitz map. Since the strong minimum-attainment arises from the aforementioned concepts, our fundamental question is therefore the following:
\begin{question}
For which metric spaces $M$ is $\SMA(M)$ dense in $\Lip(M)$?
\end{question}
The present paper provides an affirmative answer to this question by giving necessary and sufficient conditions on $M$, in contrast to the situation for the strong norm-attainment. In fact, this question is naturally related to the bi-Lipschitz geometry of $M$. A Lipschitz map $f \in \Lip(M,Y)$ is called a \emph{bi-Lipschitz embedding} \cite{O} from $M$ into $Y$ if there exist constants $C>c>0$ such that
$$
c d(p,q) \leq \|f(p)-f(q)\| \leq C d(p,q) \quad \text{for all } p, q \in M.
$$
In this case, we say $M$ is \textit{bi-Lipschitz equivalent} to the image $f(M)$ in $Y$. We will see through Lemma \ref{lemma:bi-Lipschitz} that the density problem is closely connected to the approximation of bi-Lipschitz embeddings from $M$ into $\R$ by minimum-attaining Lipschitz functions, and hence to the 1-dimensional bi-Lipschitz geometry of $M$. Our first main result is the following.

\begin{theorem}\label{theorem:SMA(M)}
Let $M$ be a complete metric space. Then, the following are equivalent.
\begin{enumerate}
\item[\textup{(a)}] $\SMA(M)$ is not dense in $\Lip(M)$.
\item[\textup{(b)}] $M$ is bi-Lipschitz equivalent to a subset of $\R$ with positive Lebesgue measure.
\item[\textup{(c)}] $M$ admits a bi-Lipschitz embedding into $\R$ and $\mathcal{H}^1(M)>0$.
\end{enumerate}
\end{theorem}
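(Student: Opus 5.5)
I would prove the three implications (b)$\Leftrightarrow$(c), (b)$\Rightarrow$(a) and (a)$\Rightarrow$(b). The key is the observation referred to as Lemma \ref{lemma:bi-Lipschitz}. If $f\in\Lip(M)$ is not bi-Lipschitz, then $m(f)=0$. In that case a small perturbation $f+\eps g$ either strongly attains the minimum value $0$, because it fails to be injective, or it is itself bi-Lipschitz. So density of $\SMA(M)$ can fail only near bi-Lipschitz embeddings $M\to\R$. In particular, if $M$ admits no bi-Lipschitz embedding into $\R$, then $m(f)=0$ for every $f$. In that case I would show that every $f$ can be approximated by non-injective functions. I would pick $p\neq q$ with $|f(p)-f(q)|<\eps d(p,q)$ and add a McShane-extended correction $h$ with $\|h\|\le\eps$ such that $(f+h)(p)=(f+h)(q)$. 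The same trick handles a bi-Lipschitz $M$ with $\mathcal H^1(M)=0$, as described below.

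(b)$\Leftrightarrow$(c) is routine. Bi-Lipschitz maps preserve whether $\mathcal H^1$ is null or positive, and on $\R$ the measure $\mathcal H^1$ coincides with Lebesgue measure. Since $M$ is complete, its image in $\R$ is closed, so it is measurable.

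For (b)$\Rightarrow$(a), let $\phi:M\to A\subset\R$ be bi-Lipschitz, with $A$ closed and $|A|>0$. I would show that $\phi$ has a neighbourhood containing no element of $\SMA(M)$. Transport the problem to $A$. For $g\in\Lip(A)$ close to the identity, $g$ is increasing and bi-Lipschitz, and $m(g)=\inf$ of difference quotients. Fix a density point $t$ of $A$ at which $g'$ exists and almost attains $\operatorname{ess\,inf} g'$ (Lebesgue/Rademacher). Then $m(g)$ equals the essential infimum of $g'$ over $A$, suitably interpreted. The argument is as follows. For $p<q$ in $A$, the quotient $(g(q)-g(p))/(q-p)$ is an average of $\tilde g'$ over $[p,q]$, where $\tilde g$ is the piecewise-linear extension across the gaps. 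Each gap slope is itself a chord quotient between points of $A$. Iterating, one finds that $m(g)$ is the infimum of the slopes of $g$ at density points. To break attainment, I would perturb $\phi$ by a small function whose derivative on $A$ keeps dipping towards a value that it never reaches. For example, add $\eps\int_0^x w$, where $w>0$ on $A$, $w$ is continuous off a point, and $\inf_A w=0$ is approached only along density points accumulating at a point $t_0$. Then every chord quotient is strictly larger than the infimum, because each one is an average of a function that is strictly bigger than the infimum on a set of positive measure. So this $\phi$ and a whole ball around it lie outside $\overline{\SMA(M)}$. The ball is needed because any nearby $g$ inherits the same strict-average structure once $\eps$ dominates the perturbation radius. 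This is the main obstacle. Openness is delicate, since nearby $g$ need not be $C^1$. I would instead aim to show directly that the set of $g$ for which $m(g)$ is not attained contains an open set. The plan for this is to prove that for $g$ near $\phi$, $m(g)=\operatorname{ess\,inf}_A g'$. That infimum is attained by a chord $(p,q)$ only if $g'=m(g)$ almost everywhere on $[p,q]\cap A$ and the gaps are flat in the same sense. One then chooses $\phi$ so that this rigidity is impossible robustly, using a strictly convex-type perturbation of size comparable to the ball.

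For (a)$\Rightarrow$(b), assume $\mathcal H^1(M)=0$ or that $M$ is not bi-Lipschitz embeddable. The non-embeddable case was handled above. In the remaining case, given a bi-Lipschitz $f$ whose image $f(M)$ is a closed null set, I would use null measure to find gaps. Let $\mu=m(f)$. Choose a chord $p<q$ whose quotient is within $\eps$ of $\mu$. Because $f(M)\cap[f(p),f(q)]$ is null, the image can be re-parametrised by a small perturbation: define $g$ on $f(M)$ so that the slopes across the gaps are adjusted. The total gap length equals $|f(q)-f(p)|$, which allows $g$ to be made exactly affine with slope $\mu-\eps'$ from $p$ to $q$, while all other quotients stay at least $\mu-\eps'$. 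Then $g\circ f\in\SMA(M)$ and $\|g\circ f-f\|\lesssim\eps$.
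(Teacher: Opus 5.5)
Your treatment of the case $m(f)=0$ (a non-injective perturbation of norm $<\eps$, essentially Proposition~\ref{prop:minimum-zero}) and of (b)$\Leftrightarrow$(c) is fine. However, both substantive directions have genuine gaps.

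For (b)$\Rightarrow$(a), your rigidity observation is the right mechanism: a chord at which the minimum is attained forces the extended derivative to equal $m(g)$ a.e.\ on it, which is the derivative version of Lemma~\ref{lemma:minimum-preserve}. But your candidate $\phi+\eps\int_0^x w$ is not robust. Since $w$ is continuous off $t_0$, there are a density point $s\neq t_0$ of $A$ and $r>0$ with $w<\eta$ on $(s-r,s+r)$. Replacing $w$ by $\max(w,\eta)$ changes the function by at most $\eps\eta$ in Lipschitz norm. The new function has all chord quotients $\geq 1+\eps\eta$, with equality on every chord with endpoints in $A\cap(s-r,s+r)$. So your function lies in the closure of $\SMA$. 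The same flattening disposes of convex-type perturbations, and indeed of any $\phi$ whose extended derivative stays essentially within $2\eta$ of its essential infimum on some interval with endpoints in $A$. Robustness requires a derivative that is small on a set of positive measure but large on a set of positive measure inside \emph{every} interval. This is exactly what Proposition~\ref{prop:subset-measure} supplies: the paper takes $\ph=2\|f\|/m(f)$ on $F\cup(\R\setminus f(M))$ and $\ph=1$ on $f(M)\setminus F$. Then $m(g)<\|f\|+\eps$ whenever $\|g-h\|<\eps$. A minimizing chord of $g$ must meet $f(M)$ in positive measure, since otherwise its $h$-slope is $\geq 2\|f\|$. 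So it contains a density point of $F$, hence a sub-chord of $h$-slope $\geq\frac32\|f\|$, which Lemma~\ref{lemma:minimum-preserve} forbids. Note also that you cannot simply transport to $A$ with the Euclidean metric. Strong minimum attainment is not invariant under a bi-Lipschitz change of metric, and $|g(p)-g(q)|/d(p,q)$ is not an average of a derivative. The paper argues in the metric of $M$ and absorbs the distortion of $f$ through the ratio $2\|f\|/m(f)$.

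For the density direction when $\lam(f(M))=0$, making $g$ attain its minimum on an arbitrary near-minimal chord $(p,q)$ with points of $M$ in between cannot work in general. If $\|g-f\|<m(f)$, then $g$ is increasing along $<_f$ with $m(g)>0$. The computation in Lemma~\ref{lemma:minimum-preserve} then shows that attainment at $(p,q)$ forces $d(p,r)+d(r,q)=d(p,q)$ for every $r$ with $f(p)<f(r)<f(q)$. This is a property of $M$, not of $g$, and it can fail for every such $r$. Take $M=\{(\cos\theta,\sin\theta):\theta\in K\}\subseteq\R^2$ with $K\subseteq[0,\pi/2]$ a closed null Cantor set: $M$ is bi-Lipschitz to $K$ and has no three collinear points. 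So near $f$ the minimum can only be attained at consecutive pairs. Even in the Euclidean picture, forcing $g$ to be affine on $A\cap[p,q]$ can cost about $\|f\|-m(f)$ in norm, since a near-minimal chord may contain short steep gaps. The missing ingredient is Lemma~\ref{lemma:consecutive}: for null image, $m(f)$ is the infimum over consecutive pairs, via $f(q)-f(p)=\sum_j(f(q_j)-f(p_j))$ and $\sum_j d(p_j,q_j)\geq d(p,q)$. One then picks a near-minimal gap $(u,v)$ and lowers $f$ by a constant $\gamma$ on $\{x\in M: f(x)\geq f(v)\}$. Straddling chords satisfy $f(q)-f(p)\geq f(v)-f(u)$, so the perturbation has norm at most $\gamma\|f\|/(f(v)-f(u))$. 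The triangle inequality keeps their quotients above the new slope of $(u,v)$.
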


Here, $\mathcal{H}^1$ denotes the 1-dimensional Hausdorff measure; for a measurable subset $E$ of $\R$, one has $\mathcal{H}^1(E)=\lambda(E)$, where $\lam$ denotes the Lebesgue measure. Consequently, the failure of densness is completely rigid; it occurs precisely for those complete metric spaces which are, up to bi-Lipschitz equivalence, subsets with positive measure of the real line. Thus, unlike the corresponding minimum-attainment and strong norm-attainment problems, the strong minimum-attainment problem admits an exact geometric characterization for scalar cases.

Recall that a metric space $M$ is called \textit{purely 1-unrectifiable} if every Lipschitz image in $M$ of a subset $\R$ has zero 1-dimensional Hausdorff meausre. In this sense, note that $\mathcal{H}^1(M)>0$ in Theorem \ref{theorem:SMA(M)}.(c) can be replaced by that $M$ is not purely 1-unrectifiable. Also, recall from \cite{AGPP} that $M$ is purely 1-unrectifiable if and only $\F(M)$ has the \textup{RNP}. Let us comment that \textup{RNP} plays an important role in the geometry of Banach spaces, and many results regarding norm- and minimum-attaining operators have already characterized by the \textup{RNP} as aforementioned. Our second main theorem, illustrated as follows, fully characterizes the pure 1-unrectifiability in terms of strongly minimum-attaining Lipschitz maps on bi-Lipschitz copies of closed subsets.

\begin{theorem}\label{theorem:RNP-characterization}
Let $M$ be a complete metric space. Then, the following are equivalent.
\begin{enumerate}
\item[\textup{(a)}] $M$ is purely 1-unrectifiable.
\item[\textup{(b)}] $\SMA(N)$ is dense in $\Lip(N)$ for every bi-Lipschitz copy $N$ of a closed subset of $M$.
\item[\textup{(c)}] $\SMA(N,Y)$ is dense in $\Lip(N,Y)$ for every bi-Lipschitz copy $N$ of a closed subset of $M$ and for every Banach space $Y$.
\end{enumerate}
\end{theorem}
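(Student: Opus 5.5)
We derive Theorem~\ref{theorem:RNP-characterization} from Theorem~\ref{theorem:SMA(M)} together with a vector-valued reduction. We prove (a)$\Rightarrow$(c), then (c)$\Rightarrow$(b), then (b)$\Rightarrow$(a).

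\emph{(c)$\Rightarrow$(b).} This is immediate by taking $Y=\R$.

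\emph{(b)$\Rightarrow$(a).} We argue by contraposition. Suppose $M$ is not purely 1-unrectifiable. Then there is a Lipschitz map $g\colon K\to M$ with $K\subseteq\R$ and $\mathcal H^1(g(K))>0$. We may assume $K$ is compact, since $\mathcal H^1$ is inner regular on the relevant sets and $g$ extends to the closure. By the standard Kirchheim--type decomposition (or the area formula for Lipschitz curves), $g(K)$ contains a compact set $E$ with $\mathcal H^1(E)>0$ such that $g$ restricted to some compact $K'\subseteq K$ is a bi-Lipschitz map onto $E$. Thus $E$ is a closed subset of $M$, bi-Lipschitz equivalent to $K'\subseteq\R$, and $\lambda(K')>0$. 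Taking $N=E$, or equally $N=K'$ as a bi-Lipschitz copy, Theorem~\ref{theorem:SMA(M)} (b)$\Rightarrow$(a) shows that $\SMA(N)$ is not dense in $\Lip(N)$. Hence (b) fails.

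\emph{(a)$\Rightarrow$(c).} Let $N$ be bi-Lipschitz equivalent to a closed subset $F$ of $M$. Pure 1-unrectifiability passes to subsets, since $F\subseteq M$. It is also invariant under bi-Lipschitz maps: a Lipschitz curve fragment in $N$ of positive $\mathcal H^1$ measure maps to one in $F$ of positive measure, because $\mathcal H^1$ changes by at most a constant factor. Moreover $N$ is complete. So it suffices to show that $\SMA(N,Y)$ is dense whenever $N$ is complete and purely 1-unrectifiable.

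Fix $f\in\Lip(N,Y)$ and $\eps>0$. We split into two cases.

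\begin{enumerate}
\item[(i)] If $m(f)=0$, we perturb $f$ to obtain a map that attains the value $0$ at some pair. Pick $p\neq q$ with $\|f(p)-f(q)\|<\eps d(p,q)$. Define
$$
\tilde f = f - \frac{\bigl(f(p)-f(q)\bigr)\,h}{d(p,q)},
$$
where $h\in\Lip(N)$ satisfies $h(p)-h(q)=d(p,q)$ and $\|h\|=1$; for instance, $h=d(\cdot,q)-d(0,q)$. Then $\|\tilde f - f\|<\eps$ and $\tilde f(p)=\tilde f(q)$, so $m(\tilde f)=0$ is attained.

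\item[(ii)] If $m(f)>0$, then $f$ is a bi-Lipschitz embedding of $N$ into $Y$. This is where pure 1-unrectifiability must be used. The relevant tool is the statement, presumably Lemma~\ref{lemma:bi-Lipschitz}, which reduces density to approximating bi-Lipschitz embeddings. We reduce to the relevant property of $\F(N)$: by \cite{AGPP}, $\F(N)$ has the RNP. The natural approach is a minimum analogue of the Bourgain--Stegall perturbation argument that gives density of $\SNA$ under RNP. Consider the closed bounded set of molecules. We want a small rank-one perturbation $\tilde f = f + y\otimes \varphi$, with $\varphi\in\Lip(N)$ small, that strongly exposes a molecule where $\|\tilde f(p)-\tilde f(q)\|/d(p,q)$ is minimal.
\end{enumerate}

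The main obstacle is case (ii). Minimization is not convex, so Stegall's variational principle does not apply directly. The plan is to use the RNP version of Stegall's principle in the form used in \cite{GMMR} for minimum-attaining operators, combined with the fact that molecules of a purely 1-unrectifiable space form a set whose closed convex hull is dentable, so that molecules are strongly exposed. Concretely, we choose a molecule $m_{p,q}$ nearly realizing $m(f)$ and a functional $\varphi$ strongly exposing it. We then subtract $\delta\,\varphi(\cdot)\,\frac{f(p)-f(q)}{\|f(p)-f(q)\|}$, which lowers the ratio at $(p,q)$ by about $\delta$ while lowering other ratios far less. We iterate this with geometrically decreasing $\delta$ so that the pairs converge; completeness of $N$, together with $m(f)>0$ preventing the pairs from collapsing, yields an actual minimizing pair. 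Controlling the non-collapse of the pairs is the step we expect to require the most care.
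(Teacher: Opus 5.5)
Your steps (c)$\Rightarrow$(b), (b)$\Rightarrow$(a) and case (i) of (a)$\Rightarrow$(c) are fine. In (b)$\Rightarrow$(a) your Kirchheim argument spells out what the paper compresses into ``a consequence of Theorem~\ref{theorem:SMA(M)}''. The gap is case (ii), which is the whole content of (a)$\Rightarrow$(c). You leave it as a plan, and its two key ingredients fail as stated.

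First, you cannot pick a near-minimizing molecule $m_{p,q}$ and then a functional strongly exposing it. The RNP makes $B_{\F(N)}$ the closed convex hull of its strongly exposed points, but a given molecule need not be strongly exposed, or even extreme: for $N=\{0,1,2\}\subseteq\R$ one has $m_{2,0}=\frac12(m_{2,1}+m_{1,0})$.

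Second, $m(f)>0$ does not prevent collapse. Take the purely 1-unrectifiable space $N=\{0\}\cup\{2^{-n}:n\in\N\}$ and the function with $f(0)=0$ and slope $1+\frac1n$ between $2^{-n-1}$ and $2^{-n}$. It is bi-Lipschitz with $m(f)=1$, but this value is approached only along pairs with $d(p_n,q_n)\to 0$ and is never attained. What rules out collapse is \emph{norm} convergence of the molecules: in a complete space a norm limit of molecules is again a molecule. Your iteration gives no control over that.

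The missing idea is that Stegall's optimization principle needs no convexity. Lemma~\ref{lemma:Stegall} applies to any upper semicontinuous, bounded above function on a bounded RNP set, so the minimization over molecules can be encoded directly. The paper sets $\Phi(\mu)=\|T_f(\mu)\|$ for $\mu\in\Mol(N)$ and $\Phi(\mu)=+\infty$ for all other $\mu\in B_{\F(N)}$. Then $-\Phi$ is upper semicontinuous exactly because $\Mol(N)$ is norm closed, which is where your non-collapse issue gets absorbed.

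Stegall's principle then produces, simultaneously, $\ph\in\F(N)^*$ with $\|\ph\|<\eps<m(f)$ and $\mu_0\in\Mol(N)$ minimizing $\Psi=\Phi-\ph$ on $B_{\F(N)}$. The symmetry $\Phi(-\mu)=\Phi(\mu)$ gives $\ph(\mu_0)\ge 0$. Set $S=T_f-\ph(\cdot)\,T_f(\mu_0)/\|T_f(\mu_0)\|$. Then $\|S(\mu_0)\|=\Psi(\mu_0)$, while for every molecule $\mu$ and a suitable sign $\alpha$,
$\|S(\mu)\|\ge\|T_f(\mu)\|-|\ph(\mu)|=\Psi(\alpha\mu)\ge\Psi(\mu_0)$.
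So the Lipschitz map $g$ corresponding to $S$ strongly attains its minimum at the pair defining $\mu_0$, with $\|g-f\|=\|\ph\|<\eps$, in a single step and with no iteration.

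Your perturbation direction $(f(p)-f(q))/\|f(p)-f(q)\|$ is the right one. What is missing is that the molecule and the perturbing functional must come out of Stegall's principle together, not be chosen one after the other.
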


This result should be compared with \cite[Theorem 2.12]{CJ}. Interestingly, while the density problem for $\SMA(M)$ admits a complete characterization, unlike that for $\SNA(M)$, the corresponding isomorphic characterization is nevertheless the same. \\

The present paper is organized as follows. In Section \ref{section:main-theorem}, we give a complete proof of Theorem \ref{theorem:SMA(M)}. To this end, we first show in Proposition \ref{prop:minimum-zero} that every Lipschitz function with vanishing minimum modulus belongs to the closure of $\SMA(M)$. This key result allows us to handle the delicate cases with ease, thereby reducing the problem to bi-Lipschitz embeddings. The next main ingredient is Lemma \ref{lemma:consecutive}, which shows that, when the image of a bi-Lipschitz embedding has Lebesgue measure zero, the minimum modulus of the embedding can be computed by taking the infimum of the Lipschitz slopes only over pairs of points that are consecutive in the sense of Definition \ref{definition:consecutive}. Next, we construct a measurable set with the property that every subset of $\mathbb{R}$ having positive measure in some interval meets both this set and its complement in sets of positive measure. With the aid of these results, we finally give a systematic and constructive proof of the main theorem.

The paper concludes with Section \ref{section:remarks}, where we discuss the vector-valued setting. We first show that if $\F(M)$ has the \textup{RNP}, then the set of strongly minimum-attaining Lipschitz maps is always dense for every range space, which allows us to provide a proof of Theorem \ref{theorem:RNP-characterization}. We then show, through several examples, that the corresponding equivalences of Theorem \ref{theorem:RNP-characterization} for vector-valued version fail in general. We conclude the paper with an open question concerning the characterization of the pairs $(M,Y)$ for which the denseness holds.

\section{Proof of the main theorem}\label{section:main-theorem}

This section is devoted to give a complete proof of Theorem \ref{theorem:SMA(M)}. We start with an easy observation which shows that the bi-Lipschitz embedding is deeply related with the concept of minimum modulus.

\begin{lemma}\label{lemma:bi-Lipschitz}
Let $M$ be a complete metric space and $Y$ be a Banach space. Then for every $f \in \Lip(M,Y)$, $f$ is a bi-Lipschitz embedding from $M$ into $Y$ if and only if $m(f)>0$. In this case, the image $f(M)$ is a closed subset of $Y$.
\end{lemma}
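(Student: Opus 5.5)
The equivalence follows directly from the definitions of the minimum modulus and of a bi-Lipschitz embedding. For the forward direction, suppose $f$ is a bi-Lipschitz embedding with constants $C>c>0$. Then
$$
\frac{\|f(p)-f(q)\|}{d(p,q)} \geq c \quad \text{for all } p\neq q \in M,
$$
so taking the infimum over pairs gives $m(f)\geq c>0$. Conversely, suppose $m(f)>0$. By definition of the infimum,
$$
\|f(p)-f(q)\| \geq m(f)\, d(p,q) \quad \text{for all } p \neq q \in M,
$$
and this holds trivially when $p=q$. The upper bound is $\|f(p)-f(q)\|\le \|f\|\,d(p,q)$. Since the lower bound forces $\|f\|\ge m(f)>0$, we may take $c=m(f)$ and any $C>\max\{\|f\|,c\}$; for instance $C=\|f\|+1$ satisfies the strict inequality $C>c$ required in the definition. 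One degenerate case should be noted: if $M$ has fewer than two points, then $\Lip(M,Y)=\{0\}$ and the infimum defining $m(f)$ is over the empty set. I would treat this case separately or simply assume $M$ has at least two points, as is implicit throughout the paper.

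For the closedness claim, take $y\in\overline{f(M)}$ and a sequence $(p_n)$ in $M$ with $f(p_n)\to y$. Then $(f(p_n))$ is Cauchy in $Y$, and the inequality
$$
d(p_n,p_k)\le m(f)^{-1}\|f(p_n)-f(p_k)\|
$$
shows that $(p_n)$ is Cauchy in $M$. By completeness of $M$, $p_n\to p$ for some $p\in M$. By continuity of $f$, $f(p_n)\to f(p)$, so $y=f(p)\in f(M)$. Hence $f(M)$ is closed.

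There is no real obstacle in this argument. The only points that need care are handling the strict inequality $C>c$ in the definition and the trivial case of a one-point space. The key structural input is that the lower bound makes $f^{-1}\colon f(M)\to M$ Lipschitz, which transfers completeness from $M$ to $f(M)$. A complete subset of the Banach space $Y$ is closed, so this gives the final claim directly.
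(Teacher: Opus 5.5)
Your proposal is correct and follows the paper's argument. The paper calls the equivalence clear, and your treatment of the strict inequality $C>c$ and of the one-point case is only a harmless refinement. Closedness comes from the inequality $d(p_n,p_k)\le m(f)^{-1}\|f(p_n)-f(p_k)\|$ together with completeness of $M$, exactly as in the paper; your version is slightly cleaner because you start from a sequence with $(f(p_n))$ convergent rather than, as the paper loosely phrases it, from a Cauchy sequence in $M$.
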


\begin{proof}
The first assertion is clear. For the second assertion, given any Cauchy sequence $(p_n)$ in $M$, the inequality
$$
d(p_n,p_m) \leq \dfrac{1}{m(f)}\|f(p_n)-f(p_m)\| \quad \text{for any } n,m \in \N
$$
and the completeness of $M$ give the conclusion.
\end{proof}

The following key lemma provides the basic approximation result for Lipschitz functions with vanishing minimum modulus, allowing us to focus on bi-Lipschitz embeddings. For later use, we state the vector-valued version of this result.

\begin{prop}\label{prop:minimum-zero}
Let $M$ be a metric space and $Y$ be a Banach space. Then, every Lipschitz map $f \in \Lip(M,Y)$ with $m(f)=0$ can be approximated by strongly minimum-attaining Lipschitz maps.
\end{prop}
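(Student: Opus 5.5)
Given $f \in \Lip(M,Y)$ with $m(f)=0$ and $\eps>0$, the plan is to perturb $f$ by a map of norm at most $\eps$ so that the perturbed map has some slope exactly equal to $0$. Since slopes are nonnegative, a map $g$ with $g(p)=g(q)$ for some $p\neq q$ has $m(g)=0$ and attains it at the pair $(p,q)$. So it suffices to find $g\in\Lip(M,Y)$ with $\|f-g\|\le\eps$ and $g(p)=g(q)$ for some distinct $p,q$.

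\textbf{Step 1.} Since $m(f)=0$, pick $p\neq q$ with $\|f(p)-f(q)\|<\eps\, d(p,q)$. Set $v=f(q)-f(p)\in Y$. If $v=0$ there is nothing to do, so assume $v\neq 0$.

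\textbf{Step 2.} Take the scalar function $h(x)=d(x,p)-d(0,p)$. It belongs to $\Lip(M)$, has norm at most $1$, and satisfies $h(q)-h(p)=d(p,q)$. Define
$$
g = f - \frac{h}{d(p,q)}\, v .
$$
The correction $h v/d(p,q)$ vanishes at $0$, so $g\in\Lip(M,Y)$. Its norm is at most $\|v\|/d(p,q)<\eps$, hence $\|f-g\|<\eps$.

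\textbf{Step 3.} Check the key identity:
$$
g(q)-g(p)=v-\frac{d(p,q)}{d(p,q)}\,v=0 .
$$
So $m(g)=0$ is attained at $(p,q)$, and $g\in\SMA(M,Y)$.

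There is no genuine obstacle here. The only points needing care are:
\begin{itemize}
\item the correction map must vanish at the base point, which is why $h$ is normalised by subtracting $d(0,p)$;
\item the norm of the correction must be bounded by $\|v\|/d(p,q)$, which follows from $\|h\|\le 1$;
\item the argument does not need completeness of $M$, consistent with how the statement is phrased.
\end{itemize}
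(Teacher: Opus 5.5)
Your proposal is correct and is essentially the paper's proof. The paper also picks $u\neq v$ with $\|f(u)-f(v)\|<\eps\, d(u,v)$ and adds the rank-one correction $\frac{f(u)-f(v)}{d(u,v)}\bigl(d(u,\cdot)-d(u,0)\bigr)$, which is exactly your $-\frac{h}{d(p,q)}\,v$ with $u=p$, $v=q$; this forces $g(u)=g(v)$ at a norm cost below $\eps$.
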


\begin{proof}
Let $f \in \Lip(M,Y)$ with $m(f) = 0$ be given. Given any $\eps>0$, there exist 
$u \neq v \in M$ such that
$$
\dfrac{\|f(u)-f(v)\|}{d(u,v)}<\eps.
$$
Define another Lipschitz function $g \in \Lip(M,Y)$ by
$$
g(x) := f(x) + \frac{f(u)-f(v)}{d(u,v)} \cdot ( d(u,x) - d(u,0) ) \quad \text{for } x \in M.
$$
Then, we have that $g$ strongly attains its minimum $m(g) = 0$ at $u, v \in M$ since
$$
\dfrac{g(u)-g(v)}{d(u,v)} = \dfrac{f(u)-f(v)}{d(u,v)} - \dfrac{f(u)-f(v)}{d(u,v)} \cdot \dfrac{d(u,v)}{d(u,v)} =0.
$$
Moreover, observe that
\begin{align*}
\|g-f\| &= \sup_{p \neq q \in M} \dfrac{\|(g-f)(p)-(g-f)(q)\|}{d(p,q)} \\
&= \dfrac{\|f(u)-f(v)\|}{d(u,v)} \sup_{p \neq q \in M} \dfrac{|(d(u,p)-d(u,0))-(d(u,q)-d(u,0))|}{d(p,q)} \\
&\leq \frac{\|f(u)-f(v)\|}{d(u,v)} \cdot \sup_{p \neq q \in M} \dfrac{d(p,q)}{d(p,q)} < \eps,
\end{align*}
which implies that $g$ is the desired Lipschitz map.
\end{proof}

\begin{definition}\label{definition:consecutive}
Let $f \in \Lip(M)$ be a bi-Lipschitz embedding from $M$ into $\R$. Given $p, q \in M$, we will write $p <_f q$ if $f(p)<f(q)$. We say that two points $p \neq q \in M$ are \textit{consecutive with respect to} $f$ if
$$
f(M) \cap (\min\{f(p),f(q)\}, \max \{f(p),f(q)\}) = \emptyset,
$$
and we write $(p,q) \in \mathcal{C}_f(M)$.
\end{definition}

It is clear that $<_f$ defines a total order. The following lemma shows that, in computing the minimum modulus of $f$, it suffices to consider pairs that are consecutive with respect to $<_f$ as mentioned.

\begin{lemma}\label{lemma:consecutive}
Let $M$ be a complete metric space, and let $f \in \Lip(M)$ be a bi-Lipschitz embedding from $M$ into $\R$. If the set $f(M)$ has Lebesgue measure zero, then we have
$$
m(f) = \inf \left\{ \frac{|f(p)-f(q)|}{d(p,q)} : (p,q) \in \mathcal{C}_f(M) \right\}.
$$
\end{lemma}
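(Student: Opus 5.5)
The inequality ``$\leq$'' is trivial, since consecutive pairs form a subset of all pairs of distinct points. For the reverse inequality, fix $p \neq q$ in $M$ with, say, $f(p)<f(q)$. The aim is to show that for every $\eps>0$ some consecutive pair $(a,b)\in\mathcal{C}_f(M)$ satisfies
$$
\frac{|f(a)-f(b)|}{d(a,b)} \leq \frac{f(q)-f(p)}{d(p,q)}+\eps,
$$
or, more directly, to prove the inequality $f(q)-f(p) \geq \mu\, d(p,q)$, where $\mu$ denotes the infimum over consecutive pairs.

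By Lemma \ref{lemma:bi-Lipschitz}, $K:=f(M)\cap[f(p),f(q)]$ is a compact subset of $\R$, and by hypothesis it has Lebesgue measure zero. Its complement in $(f(p),f(q))$ is therefore an open set of full measure, namely a countable disjoint union of open intervals $(a_i,b_i)$ with $\sum_i (b_i-a_i) = f(q)-f(p)$. Each endpoint $a_i,b_i$ lies in $K$, so $a_i=f(x_i)$ and $b_i=f(y_i)$ for unique $x_i,y_i\in M$. The pair $(x_i,y_i)$ is consecutive, which gives $b_i-a_i \geq \mu\, d(x_i,y_i)$.

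The remaining task, which is the main obstacle, is to show $d(p,q) \leq \sum_i d(x_i,y_i)$. That inequality yields $f(q)-f(p)=\sum_i(b_i-a_i)\geq \mu\sum_i d(x_i,y_i)\geq \mu\, d(p,q)$.

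To prove it, I would use that $g:=f^{-1}:K\to M$ is Lipschitz with constant $1/m(f)$ (by Lemma \ref{lemma:bi-Lipschitz}) and argue by a finite-chain approximation. Given $\delta>0$, choose finitely many gaps $(a_1,b_1),\dots,(a_N,b_N)$, ordered increasingly, whose total length exceeds $f(q)-f(p)-\delta$. The remaining closed pieces $[f(p),a_1]$, $[b_1,a_2]$, \dots, $[b_N,f(q)]$ have total length less than $\delta$. By the triangle inequality along the chain $p, x_1, y_1, x_2, \dots, y_N, q$, we get
$$
d(p,q)\leq \sum_{i\leq N} d(x_i,y_i) + d(p,x_1)+\sum_{i<N} d(y_i,x_{i+1}) + d(y_N,q).
$$
Each leftover term is bounded by $\frac{1}{m(f)}$ times the length of the corresponding leftover piece, since its endpoints lie in $K$. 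The sum of the leftover terms is therefore less than $\delta/m(f)$. Letting $\delta\to0$ gives the claim.

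Since $f(M)$ has measure zero, every leftover piece is itself of small length and no interval of $f(M)$ is lost, so no compactness subtlety beyond the full-measure decomposition is needed. One point needs checking: the case where $(p,q)$ is itself consecutive is trivial, and the infimum over $\mathcal{C}_f(M)$ is not over an empty set. The latter holds because any $p\neq q$ produces gaps, as $K$ has measure zero and $f(p)<f(q)$.
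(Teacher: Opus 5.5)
Your proposal is correct and follows essentially the same route as the paper. You decompose $(f(p),f(q))\setminus f(M)$ into countably many gaps whose endpoints give consecutive pairs, then bound $d(p,q)$ by $\sum_i d(x_i,y_i)$ through a finite chain whose leftover pieces contribute at most $\frac{1}{m(f)}$ times their vanishing total length; the only cosmetic difference is that you argue directly with the infimum $\mu$ rather than by contradiction with $m(f)+\eps$.
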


\begin{proof}
We first show that $\mathcal{C}_f(M)$ is not empty. Fix any $u <_f v \in M$, and write $a := f(u)$, $b := f(v)$. Since $\lam(f(M)) = 0$, it follows that $(a,b) \setminus f(M) \neq \emptyset$ and there exists $s <_f t \in M$ such that
$$
(f(s),f(t)) \cap (a,b) = \emptyset.
$$
Indeed, since $f(M)$ is closed by Lemma \ref{lemma:bi-Lipschitz}, an open set $(a,b) \setminus f(M)$ is a countable union of disjoint open intervals, and endpoints of each interval must lie in $M$. This leads to $(s,t) \in \mathcal{C}_f(M)$.

Now, assume that there exists $\eps>0$ such that
$$
\inf \left\{ \dfrac{|f(p)-f(q)|}{d(p,q)} : (p,q) \in \mathcal{C}_f(M) \right\} \geq m(f) + \eps.
$$
Then, given any $p <_f q \in M$, let us write
$$
(f(p),f(q)) \setminus f(M) =: \bigcup_{j \in I} (f(p_j), f(q_j)).
$$
It follows that
$$
f(q)-f(p) = \sum_{j \in I} (f(q_j) - f(p_j)) \geq (m(f)+\eps) \sum_{j \in I} d(p_j,q_j) \overset{(*)}{\geq} (m(f)+\eps) d(p,q),
$$
which is a contradiction. For $(*)$, as we have
$$
(f(p),f(q)) \setminus f(M) = \bigcup_{j \in I} (f(p_j), f(q_j)),
$$
we can derive for any finitely many choices of $(p_{j_k},q_{j_k})$ for $k=1, \ldots, n$ (assume $p_{j_k} <_f p_{j_{k+1}}$ for simplicity) among $(p_j,q_j)$ that
\begin{align*}
d(p,q) &\leq d(p,p_{j_1}) + \sum_{k=1}^n d(p_{j_k},q_{j_k}) + \sum_{k=1}^{n-1} d(q_{j_k},p_{j_{k+1}}) + d(q_{j_n},q) \\
&\leq \dfrac{1}{m(f)} \left[ (f(q)-f(p)) - \sum_{k=1}^n (f(q_{j_k}) - f(p_{j_k})) \right] + \sum_{k=1}^n d(p_{j_k}, q_{j_k}),
\end{align*}
and the right-hand side converges to $\displaystyle \sum_{j \in I} d(p_j,q_j)$ as the choices are refined.
\end{proof}

The following result shows that every measurable subset of $\R$ contains a measurable subset that splits each of its positive-measure intersections with an interval into two sets of positive measure. We believe that this result is likely known, as its proof is based on a standard rational-endpoint argument. Nevertheless, we include a proof for completeness.

\begin{prop}\label{prop:subset-measure}
Let $E$ be a bounded measurable subset of an interval $I \subseteq \R$ with $\lam(E)>0$. Then, there exists a measurable set $F \subseteq E$ such that whenever an interval $J \subseteq I$ satisfies that $\lam(E \cap J) >0$, we have $0<\lam(F \cap J) < \lam(E \cap J)$.
\end{prop}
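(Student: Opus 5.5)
We will build $F$ by a countable inductive construction indexed by the intervals with rational endpoints. Enumerate all open intervals with rational endpoints contained in (the interior of) $I$ as $J_1, J_2, \ldots$, and keep only those with $\lam(E\cap J_n)>0$. Every interval $J\subseteq I$ with $\lam(E\cap J)>0$ contains some such rational interval $J_n$ with $\lam(E\cap J_n)>0$. This holds because $E\cap J$ is covered, up to a null set (the two endpoints), by countably many rational subintervals of $J$. Consequently it suffices to get $\lam(F\cap J_n)>0$ and $\lam((E\setminus F)\cap J_n)>0$ for every such $n$. Indeed, these two inequalities give $0<\lam(F\cap J_n)\le \lam(F\cap J)$ and $\lam(F\cap J)<\lam(E\cap J)$, the latter because $\lam(E\cap J)-\lam(F\cap J)=\lam((E\setminus F)\cap J)\ge \lam((E\setminus F)\cap J_n)>0$. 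Here we use boundedness of $E$, so all measures are finite.

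To build $F$, we use the standard fact that a set of positive measure contains compact subsets of positive measure of arbitrarily small measure (Lebesgue measure is non-atomic and inner regular). We inductively choose pairwise disjoint measurable sets $A_n, B_n\subseteq E\cap J_n$ with positive measure, both disjoint from $\bigcup_{k<n}(A_k\cup B_k)$, and with $\lam(A_n),\lam(B_n)\le \delta_n$ for a rapidly decreasing sequence $\delta_n$. The difficulty is that the earlier sets might exhaust $E\cap J_n$. To prevent this, we impose at step $n$ that the total measure $\sum_{k\ge n}\lam(A_k\cup B_k)$ of future choices stays below half of $\min_{j\le n}\lam(E\cap J_j)$.

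A cleaner alternative avoids this bookkeeping: choose $\lam(A_k\cup B_k)\le 2^{-k}\eta_k$ where $\eta_k$ is suitably small. The point to secure is that at stage $n$ the set $E\cap J_n\setminus\bigcup_{k<n}(A_k\cup B_k)$ still has positive measure. Since $\lam(E\cap J_n)>0$ is known for every $n$ in advance, we set $\eta_k=\min_{j\le k}\lam(E\cap J_j)$. Then $\sum_{k<n}\lam(A_k\cup B_k)\le \sum_{k<n}2^{-k}\eta_k$, which could still be as large as $\eta_1$, so this choice alone is not enough.

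The fix is to use $\sum_{k\ge j}2^{-k-1}\eta_k<\lam(E\cap J_j)/2$ for sets chosen after $j$, and to handle the sets chosen before $j$ separately. For those, at stage $k$ we choose $A_k,B_k$ to have measure less than $2^{-k-1}\min_{j}\lam(E\cap J_j\setminus \bigcup_{i<k}(\cdots))$. That minimum might be zero, so this is not quite right either. The simplest fully robust route is to order stages so that at stage $n$ we only require $\lam\bigl(E\cap J_n\setminus\bigcup_{k<n}(A_k\cup B_k)\bigr)>0$. This is ensured if, when choosing $A_k,B_k$, we take their total measure less than $\frac12\lam\bigl(E\cap J_j\setminus\bigcup_{i<k}(A_i\cup B_i)\bigr)2^{-k}$ for all $j\le k$ and also less than $2^{-k}\epsilon_j$ for $j>k$. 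The latter condition is an infinite constraint, so it is replaced by the observation below.

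Set $F=\bigcup_n A_n$. Then $F\cap J_n\supseteq A_n$ has positive measure, and $(E\setminus F)\cap J_n\supseteq B_n$ has positive measure, since each $B_n$ is disjoint from all $A_k$. For the latter, disjointness from earlier $A_k$ holds by construction, and disjointness from later $A_k$ holds because later sets are chosen avoiding $\bigcup_{k<m}(A_k\cup B_k)$. This observation removes the exhaustion issue entirely. The reserved set $B_n$ guarantees the complement part, and the only requirement at stage $n$ is that $E\cap J_n\setminus\bigcup_{k<n}(A_k\cup B_k)$ has positive measure. This is achieved by choosing each $A_k,B_k$ as subsets of small measure, with $\lam(A_k\cup B_k)<2^{-k}\cdot\frac12\,\min_{j\ge 1}\lam(E\cap J_j)$ if that minimum is positive. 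In general we make the choice adaptively, which is the main obstacle. To handle it, we will first intersect with a density argument: by the Lebesgue density theorem, pick each $A_k,B_k$ inside a tiny neighborhood of a density point. Then $E\cap J_n$ minus finitely many sets of total small measure near finitely many points still has positive measure, as long as we choose $A_k, B_k$ with measure less than $\frac12\lam(E\cap J_j)\,2^{-k}$ for all $j\le k$ and place them within $E\cap J_k$. For $j>k$, we can shrink $A_k,B_k$ retroactively, since each is only needed to have positive measure.
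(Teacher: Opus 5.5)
\textbf{There is a genuine gap.} Your reduction to rational intervals and the final bookkeeping ($A_n\subseteq F\cap J_n$, $B_n\subseteq (E\setminus F)\cap J_n$) are fine. But the whole difficulty of the proposition lies in the step you leave open: ensuring that $\lam\bigl(E\cap J_n\setminus\bigcup_{k<n}(A_k\cup B_k)\bigr)>0$ at every stage $n$. None of your remedies achieves this.
\begin{itemize}
\item A bound $\lam(A_k\cup B_k)<c\,\lam(E\cap J_j)$ imposed for all $j$ forces $\lam(A_k\cup B_k)=0$, because $\inf_j\lam(E\cap J_j)=0$.
\item Imposing the bound only for $j\le k$ gives no control over later, smaller intervals.
\item Placing $A_k,B_k$ near a density point does not help, since infinitely many later rational intervals lie inside that neighbourhood. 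For instance, with $E=I=[0,1]$, nothing in your rules forbids $A_1=(x-r,x+r)$ for small $r$. Then every later $J_n\subseteq(x-r,x+r)$ satisfies $E\cap J_n\subseteq A_1$, so stage $n$ cannot be carried out.
\item ``Shrinking retroactively'' would have to be done for infinitely many $j$, and nothing guarantees that the shrunken $A_k,B_k$ keep positive measure. Making that work is the original problem again.
\end{itemize}

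What is missing is a measure-theoretic substitute for nowhere density relative to $E$. Each removed piece must leave $E$ with positive measure in every interval where $E$ had positive measure, so that finitely many removals can never exhaust a later $E\cap J_n$.

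The paper gets this by transport. With $\psi(t)=\lam([a,t]\cap E)$, it runs essentially your induction on $[0,\lam(E)]$, but using fat Cantor sets. These are closed and nowhere dense, so finitely many of them always leave an open subinterval of each $J_n$ free. This produces $C$ with $0<\lam(C\cap J)<\lam(J)$ for every open $J$. The paper then sets $F=E\cap\psi^{-1}(C)$ and uses $\lam(F\cap[c,d])=\lam(C\cap[\psi(c),\psi(d)])$ together with $\lam(E\cap[c,d])=\psi(d)-\psi(c)$.

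Your own scheme can also be repaired by carrying an invariant: $R_k:=E\setminus\bigcup_{i\le k}(A_i\cup B_i)$ has positive measure in every open interval where $E$ does. At stage $k$, choose in each rational interval $Q_i$ with $\lam(R_{k-1}\cap Q_i)>0$ an open interval $O_i\subseteq Q_i$ with $\lam(R_{k-1}\cap O_i)>0$ and $\sum_i\lam(O_i)<\lam(R_{k-1}\cap J_k)$. Then split $(R_{k-1}\cap J_k)\setminus\bigcup_i O_i$ into $A_k$ and $B_k$. Since $R_k\supseteq R_{k-1}\cap\bigcup_i O_i$, the invariant persists. No such device appears in the proposal, so as written the inductive construction is not shown to be possible.
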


\begin{proof}
Set $I = [a,b]$ and consider a non-decreasing absolutely continuous function $\psi : I \to [0,\lam(E)]$ defined by
$$
\psi(t) := \lam([a,t] \cap E) \quad \text{for } t \in [a,b].
$$
We will construct a subset $C \subseteq [0, \lam(E)]$ satisfying $0<\lam(C \cap J) < \lam(J)$ for every open interval $J \subseteq [0, \lam(E)]$.

Let $(J_n)$ be the countable collection of all open intervals in $[0,\lam(E)]$, where the endpoints of each $J_n$ are rational. In $J_1$, fix any two disjoint closed intervals and let $C_1$, $D_1$ be fat Cantor sets with positive measures constructed on each interval. In $J_2$, we may select two closed disjoint intervals which do not intersect with $C_1$ and $D_1$, which is possible since $C_1$ and $D_1$ are nowhere dense. We construct $C_n$ and $D_n$ inductively so that they do not intersect with previous fat Cantor sets, and let $C := \bigcup_{n=1}^\infty C_n$. Then by construction, we have both
\begin{equation}\label{eq:both-positive}
\lam(C \cap J)>0 \quad \text{and} \quad \lam(J \setminus C)>0
\end{equation}
for any open interval $J \subseteq [0, \lam(E)]$ since $J$ contains some $J_k$ which contains $C_k$ with positive measure, whereas it excludes $D_k$ with positive measure.

Now, we define $F := E \cap \psi^{-1}(C)$. We claim that if an interval $J \subseteq I$ satisfies that $\lam(E \cap J)>0$, then $0<\lam(F \cap J) < \lam(E \cap J)$. Given $J = [c,d] \subseteq I$,
\begin{align*}
\lam(F \cap J) = \int_c^d \chi_F(t)\,dt &= \int_c^d \chi_E(t) \cdot \chi_C(\psi(t))\,dt \\
&= \int_c^d \psi'(t) \cdot \chi_C(\psi(t)) \, dt \\
&= \int_{\psi(c)}^{\psi(d)} \chi_C(s) \, ds = \lam(C \cap [\psi(c), \psi(d)]),
\end{align*}
since $\psi$ is absolutely continuous and $\chi_E(t) = \psi'(t)$
almost everywhere. So if $\lam(E \cap J) >0$, then it follows that $\lam([\psi(c), \psi(d)])>0$ otherwise
$$
\lam([a,c] \cap E) = \lam([a,d] \cap E),
$$
which contradicts $\lam(E \cap J)>0$. By construction of $C$, this gives that
$$
0 < \lam(C \cap [\psi(c),\psi(d)]) = \lam(F \cap J) <\lam([\psi(c),\psi(d)]) = \lam(E \cap J)
$$
from \eqref{eq:both-positive}, as desired.
\end{proof}

A useful consequence of the Lebesgue density theorem is that one can find, near a Lebesgue density point, two points of the set whose distance is arbitrarily large relative to the measure of the complement lying between them.

\begin{lemma}\label{lemma:densness-point}
Let $E$ be a measurable subset of $\R$ with $\lam(E)>0$, and let $t$ be a Lebesgue density point of $E \subseteq \R$. Then for every $\eps>0$ and $\delta>0$, there exist $a,b \in E \cap(t-\delta,t+\delta)$ with $a<b$ such that $\eps(b-a) > \lam((a,b) \setminus E)$.
\end{lemma}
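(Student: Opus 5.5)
The plan is to apply the definition of a Lebesgue density point directly to a symmetric interval around $t$. The only subtlety is that $t$ itself need not lie in $E$, so the endpoints $a,b$ must be chosen from $E$ rather than taken to be $t\pm r$.

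Fix $\eps>0$ and $\delta>0$; we may assume $\eps<1$. Since $t$ is a density point,
$$
\lim_{r\to 0^+}\frac{\lam((t-r,t+r)\setminus E)}{2r}=0.
$$
Choose $r\in(0,\delta)$ with $\lam((t-r,t+r)\setminus E) < \eta \cdot 2r$, where $\eta>0$ is small and will be fixed below in terms of $\eps$.

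Next we pick the endpoints. In each of the two short intervals $(t-r,t-r+2\eta r)$ and $(t+r-2\eta r,t+r)$, the measure of the complement of $E$ is less than $2\eta r$, which is the length of the interval. Hence each of these intervals meets $E$, and we can choose
$$
a\in E\cap(t-r,t-r+2\eta r),\qquad b\in E\cap(t+r-2\eta r,t+r).
$$
Both points lie in $(t-\delta,t+\delta)$. If $\eta<1/2$, the two short intervals are disjoint and ordered, so $a<b$. Moreover $b-a > 2r-4\eta r = 2r(1-2\eta)$.

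Finally we compare. Since $(a,b)\subseteq(t-r,t+r)$,
$$
\lam((a,b)\setminus E)\le \lam((t-r,t+r)\setminus E) < 2\eta r .
$$
It therefore suffices to have $2\eta r \le \eps\cdot 2r(1-2\eta)$, that is, $\eta\le \eps(1-2\eta)$. This holds, for example, with $\eta=\eps/4$ when $\eps<1$, because then $1-2\eta\ge 1/2$ and so $\eps(1-2\eta)\ge \eps/2 \ge \eta$. For $\eps\ge 1$, the same choice made for $\eps=1/2$ already gives the required inequality. This yields $\eps(b-a)>\lam((a,b)\setminus E)$.

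The only step needing care is the existence of the endpoints in $E$ near $t\pm r$, and this follows because the missing measure in each short interval is smaller than the interval's length. No earlier results of the paper are needed.
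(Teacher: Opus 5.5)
Your proof is correct and follows essentially the same route as the paper: choose a small symmetric interval around $t$ where the complement of $E$ has small relative measure, pick $a,b\in E$ near its two ends, and bound $\lam((a,b)\setminus E)$ by the missing measure of the whole interval. Your bookkeeping is actually tighter than the paper's. The paper takes $a,b$ in the outer halves $(t-\delta_1,t-\delta_1/2)$ and $(t+\delta_1/2,t+\delta_1)$ using only $\lam((t-\delta_1,t+\delta_1)\setminus E)<\min\{\eps/2,1/2\}\cdot 2\delta_1\le\delta_1$, which does not force each half-interval of length $\delta_1/2$ to meet $E$ (replacing $1/2$ by $1/4$ fixes this), whereas your end intervals of length $2\eta r$ each carry less than $2\eta r$ of missing measure, so the existence of $a$ and $b$ is fully justified.
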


\begin{proof}
Let $\eps>0$ and $\delta>0$ be given. Since $t \in E$ is a Lebesgue density point of $E$, there exists $0<\delta_1<\delta$ such that
$$
\min \left\{ \dfrac{\eps}{2}, \dfrac{1}{2} \right\} \cdot 2\delta_1 > \lam((t-\delta_1,t+\delta_1) \setminus E).
$$
Choose any $a \in (t-\delta_1, t-\frac{\delta_1}{2}) \cap E$ and $b \in (t+\frac{\delta_1}{2},t+\delta_1) \cap E$, otherwise $\delta_1 > \lam((t-\delta_1,t+\delta_1) \setminus E) \geq \delta_1$, which is a contradiction. Thus, we have
$$
\lam((a,b) \setminus E) \leq \lam((t-\delta_1,t+\delta_1) \setminus E) \leq \min \left\{ \dfrac{\eps}{2}, \dfrac{1}{2} \right\} \cdot 2\delta_1 \leq \eps \delta_1 \leq \eps (b-a),
$$
which finishes the proof.
\end{proof}

As our final lemma, we record a simple but useful observation; if a Lipschitz function strongly attains its minimum at a pair of points, then every point whose image lies between the corresponding function values gives rise to new pairs at which the minimum is also strongly attained.

\begin{lemma}\label{lemma:minimum-preserve}
Let $M$ be a metric space, and let $f \in \Lip(M)$ be given. If $f$ strongly attains its minimum at some $p \neq q \in M$, then $f$ also strongly attains its minimum both at $p \neq r \in M$ and $r \neq q \in M$ for any $r \in M$ with $f(p)<f(r)<f(q)$.
\end{lemma}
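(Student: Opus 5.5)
The plan is to reduce the claim to one application of the triangle inequality, combined with the fact that $m(f)$ is a lower bound for every slope. Set $m := m(f)$ and assume, relabelling if needed, that $f(p)<f(q)$, so that $f(q)-f(p) = m\, d(p,q)$. Fix $r \in M$ with $f(p)<f(r)<f(q)$. Then $r \neq p$ and $r \neq q$, so the pairs $(p,r)$ and $(r,q)$ are admissible in the infimum defining $m(f)$.

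By the definition of the minimum modulus we have the two lower bounds
$$
f(r)-f(p) \geq m\, d(p,r) \quad \text{and} \quad f(q)-f(r) \geq m\, d(r,q).
$$
Adding them and using the triangle inequality gives
$$
m\, d(p,q) = f(q)-f(p) \geq m\bigl(d(p,r)+d(r,q)\bigr) \geq m\, d(p,q).
$$
Hence all the inequalities are equalities. If $m>0$, this forces $d(p,r)+d(r,q)=d(p,q)$. Then each of the two nonnegative defects $\bigl(f(r)-f(p)\bigr) - m\,d(p,r)$ and $\bigl(f(q)-f(r)\bigr) - m\,d(r,q)$ sums to zero with the other, so each of them vanishes. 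This gives $|f(r)-f(p)|/d(p,r) = m$ and $|f(q)-f(r)|/d(r,q)=m$, which is exactly the claim.

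The only point needing care is the case $m=0$. Then $f(p)=f(q)$, which contradicts the strict inequality $f(p)<f(r)<f(q)$, so no such $r$ exists and the statement holds vacuously. In fact the equality argument above does not even need $m>0$: from $0 \le \text{defect}_1 + \text{defect}_2 = f(q)-f(p) - m\bigl(d(p,r)+d(r,q)\bigr) \le 0$ both defects vanish directly. I expect no real obstacle; the only thing to check is that the sign conventions match the absolute values in the definition of $m(f)$, and the ordering $f(p)<f(r)<f(q)$ guarantees this.
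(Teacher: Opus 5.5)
Your proof is correct and follows the paper's argument essentially verbatim: you split $f(q)-f(p)$ at $r$, bound each piece below by $m(f)$ times the corresponding distance, close the chain with the triangle inequality, and conclude that each piece attains the bound. Your remark that the case $m(f)=0$ is vacuous, and that the equality argument does not need $m(f)>0$, is a harmless addition that the paper leaves implicit.
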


\begin{proof}
Assume that $\dfrac{f(q)-f(p)}{d(p,q)} = m(f)$. Then, we have
\begin{align*}
m(f) d(p,q) = f(q)-f(p) &= [f(q)-f(r)]+ [f(r)-f(p)] \\
&\geq m(f) d(r,q) + m(f) d(p,r) \\
&\geq m(f) d(p,q),
\end{align*}
which leads to that all inequalities are, in fact, equalities. Therefore, it follows that
$$\dfrac{f(r)-f(p)}{d(p,r)} = \dfrac{f(q)-f(r)}{d(r,q)} = m(f),
$$
proving the claim.
\end{proof}

We are now ready to prove the main theorem of this section, which gives a complete characterization of those metric spaces $M$ for which $\SMA(M)$ is dense in $\Lip(M)$.

\begin{proof}[Proof of Theorem 1]
(a)$\,\Longrightarrow\,$(b); If $M$ is not bi-Lipschitz equivalent to any subset of $\R$, then it follows from Lemma \ref{lemma:bi-Lipschitz} that $m(f)=0$ for any $f \in \Lip(M)$. In this case, Proposition \ref{prop:minimum-zero} gives the conclusion. Thus it suffices to show that $\SMA(M)$ is dense in $\Lip(M)$ if $M$ is bi-Lipschitz equivalent to a subset of $\R$ with zero Lebesgue meausre. Let $f \in \Lip(M)$ be a bi-Lipschitz embedding from $M$ into $\R$ such that $\lam(f(M))=0$. Let $\eps>0$ be given with $0<\eps < m(f)$. By Lemma \ref{lemma:consecutive}, there exist $u <_f v \in M$ with $(u,v) \in \mathcal{C}_f(M)$ and $0 \leq \delta \leq \eps$ such that
$$
\dfrac{f(v)-f(u)}{d(u,v)} = m(f) \left(1 + \dfrac{\delta}{\|f\|}\right) < m(f) \left( 1 + \dfrac{\eps}{\|f\|}\right).
$$
Since $<_f$ is of total order, we can define $g \in \Lip(M)$ (may assume $0<_f u$) by
$$
g(x) = \begin{cases}
\,f(x) \,, & \text{if } x <_f u\\
\,f(x) - \gamma \,,  & \text{if } x >_f v,
\end{cases}
$$
where $\gamma := \dfrac{\eps ((f(v)-f(u))}{\|f\|}$. We claim that $\|g-f\| \leq \eps$ and $g \in \SMA(M)$. \\
To see that $\|g-f\| \leq \eps$, it suffices to consider among $p <_f q \in M$ such that $p<_f u<_f v<_f q$, as the other cases are clear. In that case,
\begin{align*}
\dfrac{|(g-f)(q)-(g-f)(p)|}{d(p,q)} = \dfrac{\gamma}{d(p,q)} \leq \dfrac{\gamma \|f\|}{f(q)-f(p)} \leq \dfrac{\gamma \|f\|}{f(v)-f(u)} = \eps.
\end{align*}
To show that $g \in \SMA(M)$, we first have
\begin{equation}\label{eq:smaller-than-m(f)}
\dfrac{|g(u)-g(v)|}{d(u,v)} = \dfrac{f(v)-f(u)-\gamma}{d(u,v)} = m(f) \left( 1+ \dfrac{\delta}{\|f\|} \right) - \dfrac{\gamma}{d(u,v)} \leq m(f)
\end{equation}
from the choice of $\gamma$. It is easy to see that
$$
m(f) \left( 1 + \dfrac{\delta}{\|f\|}\right) - \dfrac{\gamma}{d(u,v)} \geq 0
$$
since $\dfrac{\gamma}{d(u,v)} \leq m(f)$ from the fact that $\eps<m(f)$. For the next claim, it suffices to consider $p<_f u<_f v<_f q$ by the same reason. In that case, we have
\begin{align*}
g(q)-g(p) &= f(q)-f(p)-\gamma \\
&= [f(q)-f(v)] + [f(v)-f(u) - \gamma] + [f(u)-f(p)] \\
&\geq m(f)d(v,q) + \left[ m(f) \left( 1+ \dfrac{\delta}{\|f\|} \right) - \dfrac{\gamma}{d(u,v)} \right] d(u,v) + m(f)d(p,u) \\
&\geq \left[ m(f) \left( 1+ \dfrac{\delta}{\|f\|} \right) - \dfrac{\gamma}{d(u,v)} \right] (d(p,u) + d(u,v) + d(v,q)) \\
&\geq \left[ m(f) \left( 1+ \dfrac{\delta}{\|f\|} \right) - \dfrac{\gamma}{d(u,v)} \right] d(p,q),
\end{align*}
where the second inequality follows from the inequality in \eqref{eq:smaller-than-m(f)}. This shows that $g$ strongly attains its minimum at $u \neq v \in M$.

(b)$\,\Longrightarrow\,$(a); Let $f$ be a bi-Lipschitz embedding from $M$ into $\R$. Since $\lam(f(M))>0$, we may fix an interval $I \subseteq \R$ such that $\lam(f(M) \cap I)>0$. By Proposition \ref{prop:subset-measure}, there exists a measurable $F \subseteq f(M) \cap I$ such that whenever an interval $J \subseteq I$ satisfies that $\lam(J \cap f(M))>0$, we have $0<\lam(J \cap F) < \lam(J \cap f(M))$. Define $\ph$ on $\R$ by
$$
\ph(t) = \begin{cases}
\,\dfrac{2\|f\|}{m(f)}\,, & \text{if } t \in F \text{ or } t \notin f(M), \\
\,\,1 \,,  & \text{if } t \in f(M) \setminus F,\phantom{\displaystyle \int_a^b}
\end{cases}
$$
and define $h \in \Lip(M)$ by $h(x) := \displaystyle \int_{f(0)}^{f(x)} \ph(t)\,dt$. Let $0<\eps<\|f\|/4$ be given, and let $g \in \SMA(M)$ be such that $\|g-h\|<\eps$. We will derive a contradiction.

First, by assumption, we have $\lam((f(M) \cap I) \setminus F)>0$. Let $t \in (f(M) \cap I) \setminus F$ be a Lebesgue density point of $(f(M) \cap I) \setminus F$. By Lemma \ref{lemma:densness-point}, for any $n \in \N$ there exist $a_n,b_n \in (f(M) \cap I) \setminus F$ such that $\frac{1}{n}(b_n-a_n) > \lam((a_n,b_n) \setminus (f(M) \setminus F))$. Thus if we write $p_n:=f^{-1}(a_n)$ and $q_n:=f^{-1}(b_n)$ for each $n \in \N$, we have
\begin{align*}
h(q_n)-h(p_n) &= \int_{a_n}^{b_n} \ph(t)\,dt \\
&\leq \lam((a_n,b_n) \setminus F) + \dfrac{2\|f\|}{m(f)} \cdot \lam ((a_n,b_n) \cap F) \\
&\leq \left(1+\dfrac{2\|f\|}{nm(f)}\right)(b_n-a_n) \\
&\leq \left(1+\dfrac{2\|f\|}{nm(f)}\right)d(p_n,q_n)\|f\|,
\end{align*}
where the second inequality follows from $(a_n,b_n) \cap F \subseteq (a_n,b_n) \setminus (f(M) \setminus F)$. Since $n \in \N$ was arbitrary, it follows that $m(h) \leq \|f\|$. Hence we can deduce from $\|g-h\|<\eps$ that $m(g) < \|f\| + \eps$.

Assume now that $\dfrac{g(v)-g(u)}{d(u,v)} = m(g)>0$ for some $u \neq v \in M$. Since
$$
h(q)-h(p) = \int_{f(p)}^{f(q)} \ph(t)\,dt \geq f(q)-f(p) \geq m(f)d(p,q)
$$
for every $p <_f q \in M$, we can deduce $m(h) \geq m(f)>0$. Moreover,
$$
\dfrac{h(v)-h(u)}{d(u,v)} \leq \dfrac{g(v)-g(u)}{d(u,v)} + \|g-h\| < \|f\| + 2\eps < \dfrac{3}{2} \|f\|.
$$
Now, we must have $\lam([f(u),f(v)] \cap f(M)) >0$; otherwise we have
$$
h(v)-h(u) = \int_{f(u)}^{f(v)} \ph(t)\,dt = \dfrac{2\|f\|}{m(f)} (f(v)-f(u)) \geq 2\|f\| d(u,v),
$$
which contradicts the fact that $\dfrac{h(v)-h(u)}{d(u,v)} < \|f\|$.

By the assumption of $F$, it follows that
$$
0< \lam([f(u),f(v)] \cap F) < \lam([f(u),f(v)] \cap f(M)).
$$
Let $t \in (f(u),f(v))$ be a Lebesgue density point of $F$. By Lemma \ref{lemma:densness-point}, there exists $c_n,d_n \in (f(u),f(v)) \cap F$ such that $\frac{1}{n}(d_n-c_n) > \lam((c_n,d_n) \setminus F)$. Writing $s_n := f^{-1}(c_n)$ and $t_n := f^{-1}(d_n)$, we have
\begin{align*}
h(t_n)-h(s_n) &= \int_{c_n}^{d_n} \ph(t)\,dt \\
&\geq \dfrac{2\|f\|}{m(f)} \cdot \lam ((c_n,d_n) \cap F) + \lam((c_n,d_n) \setminus F) \\
&\geq \dfrac{2\|f\|}{m(f)} \left( 1- \dfrac{1}{n}\right) (d_n-c_n) \\
&\geq 2\|f\| \left(1- \dfrac{1}{n}\right) d(s_n,t_n).
\end{align*}
Thus for sufficiently large $n_0 \in \N$, we may assume that
$$
\dfrac{h(t_{n_0})-h(s_{n_0})}{d(s_{n_0},t_{n_0})} \geq \dfrac{3}{2} \|f\|.
$$
However, since $f(u)<c_{n_0}<d_{n_0}<f(v)$, we also have $g$ strongly attains its minimum at $s_{n_0} \neq t_{n_0} \in M$ by Lemma \ref{lemma:minimum-preserve}. This is a contradiction since 
$$
\dfrac{h(t_{n_0})-h(s_{n_0})}{d(s_{n_0},t_{n_0})} \leq \dfrac{g(t_{n_0})-g(s_{n_0})}{d(s_{n_0},t_{n_0})} + \|g-h\| < \dfrac{3}{2}\|f\|.
$$

(b)$\,\Longleftrightarrow\,$(c); this equivalence is clear. Indeed, if $f\in\Lip(M)$ is a bi-Lipschitz embedding, then the bi-Lipschitz invariance of positivity of the one-dimensional Hausdorff measure yields
$$
\mathcal H^1(M)>0 \quad\Longleftrightarrow\quad \lambda(f(M))>0,
$$
which completes the proof.
\end{proof}

\section{Characterizations for vector-valued Lipschitz maps}\label{section:remarks}

In this section, we present results and observations concerning vector-valued Lipschitz maps and the denseness of strongly minimum-attaining Lipschitz maps, extending the results for scalar-valued Lipschitz functions. We will first provide a complete proof of Theorem \ref{theorem:RNP-characterization}. To this end, we show that the \textup{RNP} of the Lipschitz-free space over $M$ ensures the denseness of $\SMA(M,Y)$ regardless of the choice of $Y$.

\begin{lemma} \textup{(Stegall's optimization principle, \cite[Theorem 14]{S})}\label{lemma:Stegall}
Let $X$ be a Banach space, and let $D \subseteq X$ be a bounded \textup{RNP} set. Suppose that $\Phi : D \to [-\infty,\infty)$ is an upper semi-continuous and bounded above function. If $\Phi$ is not identically $-\infty$ on $D$, then the set
$$
\{x^* \in X^* : \Phi + \text{Re}\,x^* \text{ strongly exposes } D\}
$$
is a $G_\delta$-subset of $X^*$.
\end{lemma}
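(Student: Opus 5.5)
The plan is a Baire category argument in $X^*$, with the RNP of $D$ entering only through Bourgain's theorem on strongly exposing functionals. Replacing $\Phi$ by $\max\{\Phi, -C\}$ for a large constant $C$ does not change which functionals strongly expose $D$, provided we stay near a point where $\Phi$ is finite, and it keeps $\Phi$ upper semi-continuous. So I would first reduce to the case where $\Phi$ is bounded; this reduction has to be checked carefully because of the $-\infty$ values. For $x^* \in X^*$ write $\sigma(x^*) := \sup_D(\Phi + \text{Re}\,x^*)$, which is finite. For $n \in \N$ let $U_n$ be the set of $x^*$ for which there is $\alpha>0$ with
$$
\operatorname{diam}\{x \in D : \Phi(x)+\text{Re}\,x^*(x) > \sigma(x^*) - \alpha\} < 1/n .
$$
The set in the statement is exactly $\bigcap_n U_n$. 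Indeed, a maximizing sequence is then Cauchy, so it converges to some $x_0$ in the closure of $D$; upper semi-continuity of $\Phi$ shows that the supremum is attained at $x_0$, and the shrinking slices show that $x_0$ is the unique point at which it is strongly exposed. I will assume $D$ is closed, or else replace $D$ by its closure, and check that this is harmless.

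\textbf{Step 1: each $U_n$ is open.} Fix $x^* \in U_n$ with parameter $\alpha$, and take $\|y^*-x^*\|<\alpha/4R$, where $D \subseteq RB_X$. Then $|\sigma(y^*)-\sigma(x^*)| \le \alpha/4$, and the $(\alpha/2)$-slice of $\Phi + \text{Re}\,y^*$ lies inside the $\alpha$-slice of $\Phi + \text{Re}\,x^*$. Hence $y^* \in U_n$. This step is routine.

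\textbf{Step 2: each $U_n$ is dense.} This is the main step and the only place the RNP is used. Fix a large constant $c$ and work in $X \oplus_1 \R$ with the set
$$
K := \overline{\mathrm{co}}\,\{(x,s) : x \in D,\ \Phi(x)-c \le s \le \Phi(x)\}.
$$
The set $K$ is bounded, and it is an RNP set because it sits inside $\overline{\mathrm{co}}(D)\times[-c',c']$ and closed convex hulls of RNP sets are RNP. By Bourgain's theorem, the functionals strongly exposing $K$ are dense in $(X\oplus\R)^*$. So for a given $x^*$ we can find a strongly exposing functional $(y^*,\tau)$ close to $(x^*,1)$. Then $\tau>0$, and after rescaling $(y^*/\tau,1)$ still strongly exposes $K$, with $y^*/\tau$ close to $x^*$.

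It remains to transfer this to $D$. Maximizing $\text{Re}\,y^*(x)/\tau + s$ over $K$ amounts to maximizing $\Phi + \text{Re}\,y^*/\tau$ over $D$, since for each $x$ the best choice is $s=\Phi(x)$. Take points $x_k$ of $D$ with $(\Phi+\text{Re}\,y^*/\tau)(x_k)$ close to the supremum. The points $(x_k,\Phi(x_k))$ then lie in small slices of $K$, and these slices have small diameter. Thus the $x_k$ form a small-diameter slice for $\Phi + \text{Re}\,y^*/\tau$, which gives $y^*/\tau \in U_n$.

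The delicate part of this transfer is that slices of $K$ contain convex combinations and not only graph points. This does not matter, because the small-diameter conclusion only needs to be applied to genuine points $(x,\Phi(x))$, and these belong to $K$. I expect this to be the main obstacle, together with making the $-\infty$ truncation compatible.

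Once Steps 1 and 2 are in place, $\bigcap_n U_n$ is a $G_\delta$ set. It is also dense by the Baire category theorem in $X^*$, which is more than the statement asks for.
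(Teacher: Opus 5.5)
The paper gives no proof of this lemma; it only cites Stegall. Your argument is the classical route: reduce to Bourgain's theorem on the density of strongly exposing functionals for a closed bounded convex RNP set, applied to the closed convex hull of the truncated graph of $\Phi$ in $X\oplus_1\R$. It is correct. Step 1 and the identification of the strongly exposing set with $\bigcap_n U_n$ work as you say. So does the transfer in Step 2: the map $x\mapsto(x,\Phi(x))$ sends $\alpha$-slices of $\Phi+\mathrm{Re}\,z^*$ on $D$ into $\alpha$-slices of $K$ for $(z^*,1)$, and it does not decrease distances. So the convex combinations in $K$ are indeed irrelevant, and what you flagged as the main obstacle is not one.

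There are a few corrections.

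\emph{Closedness of $D$.} This is a genuine hypothesis (implicit in Stegall's setting, and satisfied by $D=B_{\F(M)}$ in Proposition \ref{prop:RNP}). You cannot arrange it by passing to $\overline{D}$, because the exposed point of $\overline{D}$ may lie outside $D$. In fact the literal statement fails for non-closed $D$. In Euclidean $\R^2$, let $D$ be a countable dense subset of the unit circle and $\Phi\equiv 0$. Then $x^*$ strongly exposes $D$ if and only if $x^*\neq 0$ and $x^*/\|x^*\|\in D$. This cone is not a $G_\delta$: its trace on the circle would be a countable dense $G_\delta$, contradicting Baire. So simply assume $D$ closed.

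\emph{Localizing the truncation.} The truncation should be localized in $X^*$, not ``near a point where $\Phi$ is finite''. Suppose $D\subseteq RB_X$, $\Phi(x_1)>-\infty$ and $C>2Rr-\Phi(x_1)$. Then for every $\|y^*\|\le r$, the suprema and all sufficiently thin slices of $\Phi+\mathrm{Re}\,y^*$ and $\max\{\Phi,-C\}+\mathrm{Re}\,y^*$ coincide. This suffices, because $U_n$ is defined through $\Phi$ itself and density is a local property.

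\emph{The Baire step.} Your closing Baire argument is redundant. Step 2 already shows that $z^*$ lies in every $U_n$ whenever $(z^*,1)$ strongly exposes $K$. Hence density of the strongly exposing set follows directly from Bourgain's theorem and positive rescaling, while Step 1 gives the $G_\delta$ property. This density is omitted from the lemma as stated, but it is exactly what the paper uses in Proposition \ref{prop:RNP}.
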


\begin{prop}\label{prop:RNP}
Let $M$ be a complete metric space such that $\F(M)$ has the \textup{RNP}. Then, $\SMA(M,Y)$ is dense in $\Lip(M,Y)$ for every Banach space $Y$.
\end{prop}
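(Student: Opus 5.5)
The plan is to split into the case $m(f)=0$, handled by Proposition \ref{prop:minimum-zero}, and the case $m(f)>0$, handled by Stegall's optimization principle (Lemma \ref{lemma:Stegall}) applied in $\F(M)$. The one step I cannot close with earlier results of the paper is showing that the exposed point Stegall produces is a genuine molecule.

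\emph{Case $m(f)=0$.} Fix $f\in\Lip(M,Y)$ and $\eps>0$. If $m(f)=0$, Proposition \ref{prop:minimum-zero} already gives the approximation.

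\emph{Case $m(f)>0$: applying Stegall.} By Lemma \ref{lemma:bi-Lipschitz}, $f$ is a bi-Lipschitz embedding. Let $T_f\in\Lin(\F(M),Y)$ be the associated operator, so $\|T_f(m_{p,q})\|=\|f(p)-f(q)\|/d(p,q)$. Take
$$
D:=\overline{\Mol(M)}\subseteq B_{\F(M)}.
$$
This is a bounded closed set, symmetric because $m_{q,p}=-m_{p,q}$, and it is an \textup{RNP} set since $\F(M)$ has the \textup{RNP}. Define $\Phi(\mu):=-\|T_f(\mu)\|$ on $D$; it is continuous and bounded above by $0$. Since the set in Lemma \ref{lemma:Stegall} is a dense $G_\delta$ (as in Stegall's theorem), we can pick $x^*\in\Lip(M)=\F(M)^*$ with $\|x^*\|<\min\{\eps,m(f)\}$ such that $\Phi+x^*$ strongly exposes $D$ at some $\mu_0$. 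Replacing $\mu_0$ by $-\mu_0$ if needed, symmetry of $D$ gives $x^*(\mu_0)\geq 0$ and
$$
-\|T_f(\mu)\|+|x^*(\mu)|\leq -\|T_f(\mu_0)\|+x^*(\mu_0)\quad\text{for all }\mu\in D .
$$

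\emph{Building the perturbation.} Every $\mu\in D$ satisfies $\|T_f(\mu)\|\geq m(f)>0$, by continuity from molecules. In particular $\mu_0\neq 0$, and we may set $y_0:=T_f(\mu_0)/\|T_f(\mu_0)\|$ and
$$
g(x):=f(x)-x^*(x)\,y_0 .
$$
Then $\|g-f\|=\|x^*\|<\eps$. The triangle inequality and the display above give, for every molecule $\mu$,
$$
\|T_g(\mu)\|\geq \|T_f(\mu)\|-|x^*(\mu)|\geq \|T_f(\mu_0)\|-x^*(\mu_0).
$$
The right-hand side equals $\|T_g(\mu_0)\|$, because $T_g(\mu_0)=(\|T_f(\mu_0)\|-x^*(\mu_0))y_0$ and the coefficient is nonnegative, as $x^*(\mu_0)\leq\|x^*\|<m(f)\leq\|T_f(\mu_0)\|$. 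Hence, if $\mu_0=m_{p,q}$ for some $p\neq q$, then $g$ strongly attains its minimum at $p,q$, and we are done.

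\emph{Main obstacle: $\mu_0$ is a molecule.} It remains to show $\mu_0\in\Mol(M)$, not merely in its norm closure. Write $\mu_0=\lim m_{p_n,q_n}$.
\begin{itemize}
\item If $d(p_n,q_n)$ stays bounded away from $0$ (and is bounded), the norm convergence of the molecules should force $(p_n)$ and $(q_n)$ to be Cauchy. Completeness of $M$ then gives $\mu_0=m_{p,q}$.
\item If $d(p_n,q_n)\to 0$ along a subsequence, I plan to invoke the structure theory from \cite{AGPP}. The \textup{RNP} of $\F(M)$ means $M$ is purely 1-unrectifiable, and there the norm closure of $\Mol(M)$ is $\Mol(M)\cup\{0\}$: nonzero "derivation-type" limits of shrinking molecules only arise from rectifiable pieces. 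Since $\mu_0\neq 0$ (shown above), this case is excluded.
\item If that closure result is not available in the needed form, I would instead apply Lemma \ref{lemma:Stegall} directly to $D=\Mol(M)\cup\{0\}$. I would first check that this set is closed in the purely 1-unrectifiable setting, and then argue as above.
\end{itemize}
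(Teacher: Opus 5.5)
You are following the paper's route: reduce to $m(f)>0$ via Proposition~\ref{prop:minimum-zero}, apply Stegall to $\mu\mapsto-\|T_f(\mu)\|$ on molecules, and perturb by the rank-one map $x\mapsto x^*(x)\,y_0$. The paper's $S$ is exactly your $T_g$, and your check that $g$ strongly attains its minimum once $\mu_0$ is a molecule is correct. You also correctly isolated the one point that needs an argument. The paper uses it tacitly: its $\Phi$ equals $+\infty$ off $\Mol(M)$ and is at most $\|f\|$ on $\Mol(M)$, so it is lower semicontinuous precisely when $\Mol(M)$ is norm closed.

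\textbf{The gap.} Your handling of this point does not work.
\begin{enumerate}
\item The claim that the norm closure of $\Mol(M)$ is $\Mol(M)\cup\{0\}$ is false. Molecules have norm one, so every norm limit of molecules has norm one.
\item The rectifiability heuristic is misplaced. ``Derivation-type'' limits of shrinking molecules exist only weak$^*$ in $\F(M)^{**}$, never in norm, even for $M=[0,1]$. Pure 1-unrectifiability plays no role at this step.
\item The case $d(p_n,q_n)\to\infty$ is not treated, and the bounded case is only asserted.
\item Your fallback fails. On $D=\Mol(M)\cup\{0\}$ one has $\Phi(0)+x^*(0)=0$, while $\Phi(\mu)+x^*(\mu)\le -m(f)+\|x^*\|<0$ for every molecule $\mu$. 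So Stegall's strongly exposed point is $\mu_0=0$, and $y_0$ is undefined.
\end{enumerate}

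\textbf{How to close it.} The missing fact is elementary and holds for every complete $M$, without the RNP: $\Mol(M)$ is norm closed. Let $m_n:=m_{p_n,q_n}$ be norm Cauchy, put $D_n:=d(p_n,q_n)$, and test against $1$-Lipschitz functions (additive constants do not affect pairings with molecules).
\begin{enumerate}
\item Take $h=\max\{0,D_k-d(\cdot,p_k)\}$. Then $\langle h,m_k\rangle=1$ and $|\langle h,m_n\rangle|\le D_k/D_n$, so $\|m_n-m_k\|\ge 1-D_k/D_n$. This excludes $\inf_n D_n=0$.
\item Let $A=\{p_n,q_n\}$. The functions $\min\{d(\cdot,A),d(\cdot,q_k)\}$ and $\min\{d(\cdot,A),d(\cdot,p_k)\}$ vanish on $A$, and one of them pairs with $m_k$ to at least $\frac12(1-D_n/D_k)$ in absolute value. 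This excludes $\sup_n D_n=\infty$.
\item If $0<\delta\le D_n\le\Delta$, pass to a subsequence along which $D_n$ converges. For fixed $r<\delta$, the bump $\max\{0,r-d(\cdot,p_k)\}$ gives $d(p_n,p_k)\le r|1-D_n/D_k|+D_n\|m_n-m_k\|$ for large $n,k$, and likewise for $(q_n)$.
\end{enumerate}
Completeness then gives $p_n\to p$ and $q_n\to q$ with $d(p,q)=\lim D_n>0$, so $m_n\to m_{p,q}$. With this, taking $D=\overline{\Mol(M)}=\Mol(M)$ in your argument completes the proof.
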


\begin{proof}
Let $f \in \Lip(M,Y)$ be given. Assume by Proposition \ref{prop:minimum-zero} that $m(f)>0$. Fix any $0<\eps<m(f)$. Since $\F(M)$ has the \textup{RNP}, $B_{\F(M)}$ is an \textup{RNP} set, where $B_{\F(M)}$ denotes the unit ball of $\F(M)$. Consider a function $\Phi : B_{\F(M)} \to [0,\infty]$ defined by
$$
\Phi(\mu) = \begin{cases}
\,\|T_f(\mu)\| \,, & \text{if } \mu \in \Mol(M)\\
\,+\infty \,,  & \text{if } \mu \notin \Mol(M).
\end{cases}
$$
Then, it is clear that $\Phi$ is lower semi-continuous and bounded below. Applying Lemma \ref{lemma:Stegall} with $B_{\F(M)}$ and $-\Phi$, there exists $\ph \in \F(M)^*$ with $\|\ph\|<\eps$ and $\mu_0 \in \Mol(M)$ such that the function
$$
\Psi(\mu) := \Phi(\mu) - \ph(\mu)
$$
satisfies that $\Psi(\mu_0) \leq \Psi(\mu)$ for every $\mu \in B_{\F(M)}$. One can derive easily that $\ph(\mu_0) \geq 0$ by symmetry of $\Phi$ and that $\Phi(\mu) \geq \ph(\mu)$ for any $\mu \in B_{\F(M)}$ since $m(f)>0$. Now, define a bounded linear operator $S \in \Lin(\F(M),Y)$ by
$$
S(\mu) := T_f(\mu) - \ph(\mu) \cdot \dfrac{T_f(\mu_0)}{\|T_f(\mu_0)\|} \quad \text{for } \mu \in \F(M).
$$
Here, $\|T_f(\mu_0)\| \neq 0$ since $m(f)>0$. We claim that $S$ attains its minimum at $\mu_0$. Indeed,
$$
\|S(\mu_0)\| = \left\| T_f(\mu_0) - \ph(\mu_0) \cdot \dfrac{T_f(\mu_0)}{\|T_f(\mu_0)\|} \right\| = \Psi(\mu_0).
$$
Also, for every $\mu \in B_{\F(M)}$, we have
\begin{align*}
\|S(\mu)\| &= \left\| T_f(\mu) - \ph(\mu) \cdot \dfrac{T_f(\mu_0)}{\|T_f(\mu_0)\|} \right\| \\
&\geq \|T_f(\mu)\| - |\ph(\mu)| \\
&= \|T_f(\alpha\mu)\| - \ph(\alpha u) \\
&= \Psi(\alpha \mu),
\end{align*}
for an appropriate sign $\alpha \in \{\pm 1 \}$. It is routine to see that
$$
\|S-T_f\| = \|\ph\| < \eps.
$$
Finally, consider a Lipschitz map $g \in \Lip(M,Y)$ which corresponds to $S$. Then, it is clear that $g \in \SMA(M,Y)$ from the correspondence between $\Lip(M,Y)$ and $\Lin(\F(M),Y)$, and we have $\|g-f\| = \|S-T_f\|<\eps$, as desired.
\end{proof}

It is noteworthy that Proposition \ref{prop:RNP} in fact generalizes the implication (a) $\Longrightarrow$ (b) in Theorem \ref{theorem:SMA(M)}, from the previous observation that $\mathcal{H}^1(M)>0$ is equivalent to $\F(M)$ failing the \textup{RNP} when $M$ is bi-Lipschitz equivalent to a subset of $\R$. We have chosen to retain the direct proof of this implication, since it is considerably more constructive and explicit. Combining the above result with Theorem \ref{theorem:SMA(M)}, we are now able to prove Theorem \ref{theorem:RNP-characterization}.

\begin{proof}[Proof of Theorem 2]
(a)$\Longrightarrow$(c) follows from Proposition \ref{prop:RNP}, since a bi-Lipschitz copy $N$ of a closed subset of $M$ is purely 1-unrectifiable if $\F(M)$ has the \textup{RNP}. (c)$\Longrightarrow$(b) is clear, and (b)$\Longrightarrow$(a) is a consequence of Theorem \ref{theorem:SMA(M)}.
\end{proof}

We now turn to the question of whether Theorem \ref{theorem:SMA(M)} extends to the vector-valued setting. Notice that several arguments in Section \ref{section:main-theorem} that rely heavily on the scalar-valued structure, such as those used in Lemma \ref{lemma:consecutive} and Lemma \ref{lemma:minimum-preserve}, do not extend directly to the vector-valued setting. This observation naturally raises the following question.

\begin{question}
Let $M$ be a complete metric space and $Y$ be a Banach space. Are the following statements equivalent?
\begin{enumerate}
\item[\textup{(a)}] $\SMA(M,Y)$ is not dense in $\Lip(M,Y)$.
\item[\textup{(b)}] $M$ admits a bi-Lipschitz embedding into $Y$ and $\mathcal{H}^1(M)>0$.
\end{enumerate}
\end{question}

Unfortunately, this equivalence fails in general in the vector-valued setting, as the next example shows. This example also serves as a counterexample to the analogous equivalence obtained by replacing the condition $\mathcal{H}^1(M)>0$ with the failure of the \textup{RNP} for $\F(M)$.

\begin{example}\label{example:Hilbert}
Let $\Gamma$ be any uncountable set. Then, $\SMA([0,1],\ell_2(\Gamma))$ is dense in $\Lip([0,1],\ell_2(\Gamma))$.
\end{example}

\begin{proof}
Let $\eps>0$ and $f \in \Lip([0,1],\ell_2(\Gamma))$ be given. Since the image $f([0,1])$ is separable, there exists a 2-dimensional subspace $H = \spann\{u_1,u_2\} \subseteq \ell_2(\Gamma)$ with orthonormal vectors $u_1,u_2 \in S_{\ell_2(\Gamma)}$ such that $H \subseteq \bigl(\overline{\spann}f([0,1]) \bigr)^\perp$. Fix $\delta>0$ such that
$$
\sqrt{\{m(f)\}^2 + \dfrac{4\eps^2}{\pi^2}} > m(f) + \delta,
$$
and choose $a<b$ in $[0,1]$ so that
$$
\dfrac{\|f(b)-f(a)\|}{b-a} < m(f) + \delta.
$$
Define $h \in \Lip([0,1],\ell_2(\Gamma))$ by
$$
h(t) := \dfrac{(b-a)u_1}{2\pi}\left(\cos \dfrac{2\pi(t-a)}{b-a}-1\right) + \dfrac{(b-a)u_2}{2\pi}\sin \dfrac{2\pi(t-a)}{b-a} \quad \text{for } t \in [0,1],
$$
and let $g := f+\eps h$. We claim that $g \in \SMA([0,1],\ell_2(\Gamma))$ with $\|g-f\| \leq \eps$. \\
First, by construction $h \in \Lip([0,1],\ell_2(\Gamma))$ with $\|h\| =1$, which implies that $\|g-f\| \leq \eps$. We also have that
$$
\| h(t)-h(s)\| \geq \dfrac{2}{\pi} |t-s|
$$
if $|t-s| \leq (b-a)/2$. Also, it is easy to see that $h(a) = h(b) = 0$. This gives that, if $0< |t-s| \leq (b-a)/2$, then
$$
\dfrac{\|g(t)-g(s)\|}{|t-s|} = \dfrac{\sqrt{\|f(t)-f(s)\|^2 + \eps^2 \|h(t)-h(s)\|^2}}{|t-s|} \geq \sqrt{\{m(f)\}^2 + \dfrac{4\eps^2}{\pi^2}} > m(f) + \delta
$$
from that $H \subseteq \bigl(\overline{\spann}f([0,1]) \bigr)^\perp$. Combining with
$$
\dfrac{\|g(b)-g(a)\|}{b-a} = \dfrac{\|f(b)-f(a)\|}{b-a} < m(f) + \delta,
$$
it follows that $g$ does not strongly attain its minimum at a pair with a distance less than or equal to $(b-a)/2$. By compactness of the set $\{(s,t) \in [0,1]^2: |s-t| \geq (b-a)/2\}$, we finally deduce that $g \in \SMA([0,1],\ell_2(\Gamma))$.
\end{proof}

On the other hand, one might naturally expect the equivalence to hold for instance after replacing $\mathcal{H}^1(M)$ with $\mathcal{H}^{\dim(Y)}(M)$. However, the next example shows that even this fails in general.

\begin{example}\label{example:ell_infty^n}
$\SMA([0,1],\ell_\infty^n)$ is not dense in $\Lip([0,1],\ell_\infty^n)$ for each $n \in \N$.
\end{example}

\begin{proof}
Fix any $n \in \N$. Let $f \in \Lip([0,1])$ be a Lipschitz function which cannot be approximated by functions in $\SMA([0,1])$, according to Theorem \ref{theorem:SMA(M)}. By Proposition \ref{prop:minimum-zero}, we have $m(f)>0$. Fix any $0<\eps<m(f)/2$. Define $\widehat{f} \in \Lip([0,1],\ell_\infty^n)$ by $\widehat{f}(t) := f(t)e_1$, where $e_1$ is the first canonical coordinate of $\ell_\infty^n$. Suppose, towards a contradiction, that $\widehat{f} \in \overline{\SMA([0,1]),\ell_\infty^n)}$. Then there exists $\widehat{g} \in \SMA([0,1],\ell_\infty^n)$ such that $\|\widehat{g}-\widehat{f}\|< \eps$. Define $g \in \Lip([0,1])$ by $g(t) := e_1^*(\widehat{g}(t))$, where $e_1^*$ is the first canonical coordinate of $(\ell_\infty^n)^* = \ell_1^n$. Since $\|\widehat{g}-\widehat{f}\|<m(f)/2$, we have for every $k = 2, \ldots, n$ that
$$
|e_1^*(\widehat{g}(t)-\widehat{g}(s))| \geq |e_k^*(\widehat{g}(t)-\widehat{g}(s))| \quad \text{for } s \neq t \in [0,1].
$$
Indeed, given $s \neq t \in [0,1]$, observe from $e_1^* \circ \widehat{f} = f$ that
\begin{align*}
|e_1^*(\widehat{g}(t)-\widehat{g}(s))| &\geq |f(t)-f(s)| - \bigl| e_1^*((\widehat{g}-\widehat{f})(t)-(\widehat{g}-\widehat{f})(s)) \bigr| \\
&\geq (m(f)-\|\widehat{g}-\widehat{f}\|)|t-s| \\
&\geq \eps|t-s| \\
&\geq |e_k^*((\widehat{g}-\widehat{f})(t)-(\widehat{g}-\widehat{f})(s))| \\
&= |e_k^*(\widehat{g}(t)-\widehat{g}(s))|.
\end{align*}
Thus it follows that
$$
\|\widehat{g}(t)-\widehat{g}(s)\| = |e_1^*(\widehat{g}(t)-\widehat{g}(s))| = |g(t)-g(s)| \quad \text{for } s \neq t \in [0,1],
$$
which shows that $m(g) = m(\widehat{g})$. As there exist $t_0 \neq s_0 \in [0,1]$ such that $\widehat{g}$ strongly attains its minimum at $t_0 \neq s_0$, it follows that $g$ strongly attains its minimum also at $t_0 \neq s_0 \in [0,1]$. Moreover, it is clear that
$$
\|g-f\| = |e_1^*(\widehat{g}-\widehat{f})| \leq \|\widehat{g}-\widehat{f}\| < \eps.
$$
Since $\eps>0$ was arbitrary, this contradicts that $f$ cannot be approximated by functions in $\SMA([0,1])$.
\end{proof}

To summarize, Theorem \ref{theorem:RNP-characterization} suggests that under a sufficiently strong condition on $M$, denseness can be guaranteed universally with respect to the range space. Meanwhile, Examples \ref{example:Hilbert} and \ref{example:ell_infty^n} show that the denseness of strongly minimum-attaining Lipschitz maps may depend heavily on the geometric aspect of range spaces. Motivated by the preceding discussion, we conclude the paper with the following general question.

\begin{question}
For which metric spaces $M$ and Banach spaces $Y$ is $\SMA(M,Y)$ dense in $\Lip(M,Y)$?
\end{question}

\vspace{5mm}

\noindent \textbf{Acknowledgments:\ } The author would like to thank Han Ju Lee for helpful discussions on the topic. The author was supported by the National Research Foundation of Korea(NRF) grant funded by the Korea government(MSIT) (RS-2026-25475373).

\end{document}